\documentclass[10pt]{amsart}
\usepackage{amsmath, amssymb}
 \usepackage{mathrsfs}
\newcommand{\no}[1]{#1}
\renewcommand{\no}[1]{}
\no{\usepackage{times}\usepackage[subscriptcorrection, slantedGreek, nofontinfo]{mtpro}
\renewcommand{\Delta}{\upDelta}}
\usepackage{color}

%=========================================================

 \setlength{\marginparwidth}{0.6in}
\date{\today}
\setlength{\oddsidemargin}{0.0in}
\setlength{\evensidemargin}{0.0in}
\setlength{\textwidth}{6.5in}
\setlength{\topmargin}{0.0in}
\setlength{\textheight}{8.5in}

\newtheorem{theorem}{Theorem}[section]

\newtheorem{lemma}{Lemma}[section]

\newtheorem{corollary}{Corollary}[section]

\theoremstyle{remark}
\newtheorem{remark}{Remark}[section]

\numberwithin{equation}{section}

%=========================================================

\title[Logarithmic stability in determining two coefficients]{Logarithmic stability in determining two coefficients in a dissipative wave equation. Extensions to clamped Euler-Bernoulli beam and heat equations}

\author[Ka\"{\i}s Ammari]{Ka\"{\i}s Ammari}
\address{UR Analysis and Control of Pde, UR 13ES64, Department of Mathematics, Faculty of Sciences of Monastir, University of Monastir, 5019 Monastir, Tunisia }
\email{kais.ammari@fsm.rnu.tn}

\author[Mourad Choulli]{Mourad Choulli}
\address{Institut \'Elie Cartan de Lorraine, UMR CNRS 7502, Universit\'e de Lorraine, Boulevard des Aiguillettes, BP 70239, 54506 Vandoeuvre les Nancy cedex - Ile du Saulcy, 57045 Metz cedex 01, France}
\email{mourad.choulli@univ-lorraine.fr}

\date{}

%=========================================================

\begin{document}

\begin{abstract}
We are concerned with the inverse problem of determining both the potential and the damping coefficient  in a dissipative wave equation from boundary measurements. We establish stability estimates of logarithmic type when the measurements are given by the operator who maps the initial condition to Neumann boundary trace of the solution of the corresponding initial-boundary value problem. We build a method combining an observability inequality together with a spectral decomposition. We also apply this method to a clamped Euler-Bernoulli beam equation. Finally, we indicate how the present approach can be adapted to a heat equation.

\medskip
\noindent
{\bf Keywords}: Damping coefficient, potential, dissipative wave equation, boundary measurements, boundary observability, initial-to-boundary operator.

\medskip
\noindent
{\bf MSC}: 93C25, 93B07, 93C20, 35R30.
\end{abstract}

\maketitle

\tableofcontents

%=========================================================

\section{Introduction}
We consider the following initial-boundary value problem (abbreviated to IBVP in the sequel) for the wave equation:
\begin{equation}\label{wd}
\left\{
\begin{array}{lll}
 \partial _t^2 u - \Delta u + q(x)u + a(x) \partial_t u = 0 \;\; &\mbox{in}\;   Q=\Omega \times (0,\tau), 
 \\
u = 0 &\mbox{on}\;  \Sigma =\partial \Omega \times (0,\tau), 
\\
u(\cdot ,0) = u_0,\; \partial_t u (\cdot ,0) = u_1,
\end{array}
\right.
\end{equation}
where $\Omega \subset \mathbb{R}^n$, $n\geq 1$, is a bounded domain with $C^2$-smooth boundary $\partial \Omega$ and $\tau > 0$. 

\smallskip
We assume in this text that the coefficients $q$ and $a$ are real-valued.

\smallskip
Under the assumption that $q,a\in L^\infty (\Omega  )$, for each $\tau >0$ and $\left(\begin{array}{cc}u_0\\u_1\end{array}\right)\in H_0^1(\Omega )\times L^2(\Omega )$, the IBVP \eqref{wd} has a unique solution $u_{q,a}\in C([0,\tau ],H_0^1(\Omega ))$ such that $\partial _tu_{q,a}\in C([0,\tau ],L^2(\Omega ))$ (e.g.  \cite[pages 699-702]{DL}). On the other hand, by a  classical energy estimate, we have 
\[
\|u_{q,a}\|_{C([0,\tau ],H_0^1(\Omega ))}+\|\partial _t u_{q,a}\|_{C([0,\tau ],L^2(\Omega ))}\leq C(\|u_0\|_{1,2}+\|u_1\|_0).
\]
Here and henceforth, $\| \cdot \|_p$ and $\|\cdot \|_{s,p}$, $1\leq p\leq \infty$, $s\in \mathbb{R}$, denote respectively the usual $L^p$-norm and the $W^{s,p}$-norm. 

\smallskip
We note that the constant $C$ above is a non decreasing function of $\|q\|_\infty +\|a\|_\infty$.

\medskip
Now, since $u_{q,a}$ coincides with the solution of the IBVP \eqref{wd} in which $- q(x)u_{q,a} -a(x) \partial_t u_{q,a}$ is seen as a right-hand side, we can apply \cite[Theorem 2.1]{LLT} to get that $\partial _\nu u_{q,a}$, the derivative of the $u_{q,a}$ in the direction of $\nu$, the unit outward normal vector to $\partial \Omega$, belongs to $L^2(\Sigma )$. Additionaly, the mapping
\[
\left(\begin{array}{cc}u_0\\ u_1\end{array}\right)\in H_0^1(\Omega )\times L^2(\Omega ) \longrightarrow \partial _\nu u_{q,a}\in L^2(\Sigma )
\]
defines a bounded operator.

\smallskip
Let $\Gamma$ be a non empty open subset of $\partial \Omega$ and $\Upsilon =\Gamma \times (0,\tau )$. To $q,a\in L^\infty (\Omega )$, we associate the initial-to-boundary (abbreviated to IB in the following) operator $\Lambda _{q,a}$ defined by
\[
\Lambda _{q,a}: \left(\begin{array}{cc}u_0\\ u_1\end{array}\right)\in H_0^1(\Omega )\times L^2(\Omega ) \longrightarrow \partial _\nu u_{q,a}{_{|\Upsilon}}\in L^2(\Upsilon ).
\]
Clearly, from the preceding discussion, $\Lambda _{q,a}\in \mathscr{B}\left(H_0^1(\Omega )\times L^2(\Omega ),L^2(\Upsilon )\right)$.

\smallskip
We also consider two partial IB operators $\Lambda _q$ and $\tilde{\Lambda} _{q,a}$ which are given by
\[
\Lambda _q(u_0)=\Lambda_{q,0}\left(\begin{array}{cc}u_0\\ 0\end{array}\right),\quad \tilde{\Lambda} _{q,a}(u_1)=\Lambda_{q,a}\left(\begin{array}{cc}0\\ u_1\end{array}\right).
\]
Therefore, $\Lambda _q\in \mathscr{B}\left(H_0^1(\Omega ),L^2(\Upsilon )\right)$ and $\tilde{\Lambda} _{q,a} \in \mathscr{B}\left( L^2(\Omega ),L^2(\Upsilon )\right)$.

\smallskip
Next, we see that $\partial _tu$ is the solution of the IBVP \eqref{wd} corresponding to the initial conditions $u_1$ and $\Delta u_0-qu_0-au_1$. Hence, repeating the preceding analysis with $\partial _tu$ in place of $u$, we get   
\[
\Lambda _{q,a}\in \mathscr{B}\left(\left[H_0^1(\Omega )\cap H^2(\Omega )\right]\times H_0^1(\Omega ), H^1((0,\tau ),L^2(\Gamma ))\right).
\] 
Consequently, 
\begin{align*}
&\Lambda _q\in \mathscr{B}\left(H_0^1(\Omega )\cap H^2(\Omega ), H^1((0,\tau ),L^2(\Gamma ))\right),
\\
&\tilde{\Lambda} _{q,a} \in \mathscr{B}\left( H_0^1(\Omega ),H^1((0,\tau ),L^2(\Gamma ))\right).
\end{align*}

We are interested in the stability issue for the inverse problem consisting in the determination of both the potential $q$ and the damping coefficient $a$, appearing in the IBVP \eqref{wd}, from the IB map $\Lambda_{q,a}$. We succeed in proving logarithmic stability estimates  of determining $q$ from $\Lambda _q$, $a$ from  $\widetilde{\Lambda}_{q,a}$ and $(q,a)$ from $\Lambda_{q,a}$.

\medskip
We introduce the unbounded operators, defined on $H_0^1(\Omega )\times L^2(\Omega )$, as follows
\[ 
\mathcal{A}_0=\left( 
\begin{array}{cc}
0 & I  \\
\Delta  & 0  \\
 \end{array} 
 \right),\;\; D(\mathcal{A}_0)=\left[H^2(\Omega )\cap H_0^1(\Omega )\right]\times H_0^1(\Omega )
 \]
 and $\mathcal{A}=\mathcal{A}_{q,a}=\mathcal{A}_0+\mathcal{B}$ with $D(\mathcal{A})=D(\mathcal{A}_0)$, where
 \[ 
\mathcal{B}=\mathcal{B}_{q,a}=\left( 
\begin{array}{cc}
0 & 0 \\
-q & -a  \\
 \end{array} 
 \right).
 \]
Let 
\[
\mathcal{C}: D(\mathcal{A}_0)\rightarrow L^2(\Sigma ): \left(\begin{array}{cc}\varphi \\ \psi\end{array}\right)\longrightarrow \partial _\nu\varphi .
\]

Since we deal with the wave equation, it is necessary to make assumptions on $\Gamma$ and $\tau$ in order to guarantee that our system is observable. To this end, we assume that $\Gamma$ is chosen in such a way that there is $\tau _0$ such that the pair $(\mathcal{A},\mathcal{C})$ is exactly  observable for any $\tau \geq \tau _0$. We formulate the precise definition of exact observability in the next section in an abstract framework.

\smallskip
We give sufficient conditions ensuring that the pair $(\mathcal{A},\mathcal{C})$ is exactly observable. We fix $x_0\in \mathbb{R}^n\setminus \overline{\Omega}$ and we set
\[
\Gamma _0 =\{x\in \partial \Omega ;\; \nu (x)\cdot (x-x_0)> 0\}\;\; \mbox{and}\;\; d=\max_{x\in \overline{\Omega}}|x-x_0|.
\]
Let us assume that $\Gamma \supset \Gamma _0$. Following \cite[Theorem 7.2.3, page 233]{tucsnakweiss}, $(\mathcal{A}_0,\mathcal{C})$ is exactly observable with $\tau \geq \tau _0=2d$. In light  of \cite[Theorem 7.3.2, page 235]{tucsnakweiss} and the remark following it, we conclude that $(\mathcal{A},\mathcal{C})$ is also exactly observable for $\tau \geq \tau _0$, again with $\tau _0=2d$.

\smallskip
We mention that sharp sufficient conditions on $\Gamma$ and $\tau _0$ were given in a work by Bardos, Lebeau and Rauch \cite{blr}.

\smallskip
Unless otherwise stated, for sake of simplicity, all operator norms will denoted by $\| \cdot \|$. Also, $B_p$ (resp. $B_{s,p}$) denote the unit ball of $L^p(\Omega )$ (resp. $W^{s,p}(\Omega ))$.

\smallskip
We aim to prove in the present work the following theorem.

\begin{theorem}\label{mt1}
We assume that $(\mathcal{A}_0,\mathcal{C})$ is exactly observable for $\tau \geq \tau_0$, for some $\tau_0>0$. Let $0\leq q_0 \in L^\infty(\Omega )$, there is a constant $\delta>0$  so that
\begin{equation}\label{est1}
\|q-q_0\|_2\leq C\left| \ln \left(C^{-1} \|\Lambda _{q_0}-\Lambda_q\|\right)\right|^{-1/2},\quad  q\in q_0+\delta B_{1,\infty}, 
\end{equation}
and, for any $m>0$, 
\begin{align} 
& \|a\|_2\leq C\left| \ln \left(C^{-1}\| \tilde{\Lambda} _{q_0,a}-\tilde{\Lambda} _{q_0,0}\|\right)\right|^{-1/2},\quad a\in \left[\delta B_\infty \right]\cap \left[mB_{1,2}\right],\label{est2}
\\
& \|q-q_0\|_2+ \|a\|_0\leq C\left| \ln \left(C^{-1}\| \Lambda _{q,a}-\Lambda _{q_0,0}\|\right)\right|^{-1/2},\quad q\in q_0+\delta B_{1,\infty} ,\; a\in \left[\delta B_\infty \right]\cap \left[mB_{1,2}\right].\label{est3}
\end{align}
Here, $C$ is a generic constant not depending on $q$ and $a$.
\end{theorem}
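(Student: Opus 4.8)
\medskip
The plan is to pass, via a spectral decomposition of the Dirichlet operator $A_q=-\Delta+q$, from the IB operators to the \emph{boundary spectral data} of $A_q$, to prove a quantitative stability estimate for those data using the exact observability, and finally to reconstruct the coefficients by balancing a finite band of low frequencies against an a priori smoothness tail. Write $(\mu_k,\psi_k)$ for the $L^2(\Omega)$-normalized Dirichlet eigenpairs of $A_{q_0}$ and $(\lambda_k,\phi_k)$ for those of $A_q$; for $\delta$ small these stay close and, as $q_0\ge 0$, positive, so the undamped solution issued from $(\phi_k,0)$ is $u_{q,0}(\cdot,t)=\cos(\sqrt{\lambda_k}\,t)\phi_k$ with Neumann trace $\cos(\sqrt{\lambda_k}\,t)\,\partial_\nu\phi_k|_\Gamma$ on $\Upsilon$. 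Thus $\Lambda_q$ encodes, through the almost periodic in $t$ structure of its kernel, the eigenvalues $\lambda_k$ together with the traces $\partial_\nu\phi_k|_\Gamma$, while exact observability of $(\mathcal A_0,\mathcal C)$ — hence of $(\mathcal A,\mathcal C)$ by the perturbation result quoted above — guarantees that no such trace degenerates and supplies the coercivity needed to read them off.

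For \eqref{est1} I would first prove the spectral stability step: using the observability inequality restricted to the span of the modes with $\lambda_k\le\rho$, control the eigenvalue differences $|\lambda_k-\mu_k|$ and the trace differences $\|\partial_\nu\phi_k-\partial_\nu\psi_k\|_{L^2(\Gamma)}$ by $\Theta(\rho)\,\|\Lambda_{q}-\Lambda_{q_0}\|$, where the factor $\Theta(\rho)$ grows (at worst exponentially in $\rho$) because separating the time frequencies $\sqrt{\lambda_k}$ over the finite window $(0,\tau)$ becomes ill-conditioned as the eigenvalues cluster. The orthogonality identity
\[
\int_\Omega (q-q_0)\,\phi_k\psi_l\,dx=(\lambda_k-\mu_l)\int_\Omega\phi_k\psi_l\,dx,
\]
obtained from the two eigenvalue equations and the Dirichlet condition, turns the eigenvalue differences into the diagonal moments $\int_\Omega(q-q_0)\psi_k^2\,dx$; combined with the boundary traces and the classical completeness of products of eigenfunctions, this lets me control the spectral projection $P_\rho(q-q_0)$ onto $\mathrm{span}\{\psi_l:\ \mu_l\le\rho\}$ by $\Theta(\rho)\,\|\Lambda_q-\Lambda_{q_0}\|$. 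The complementary tail is handled by the a priori bound: since $q-q_0\in\delta B_{1,\infty}$ gives $\|q-q_0\|_{1,2}\le C\delta$, one has $\|(I-P_\rho)(q-q_0)\|_2\le C\rho^{-1/2}\delta$. Adding the two parts yields $\|q-q_0\|_2\le\Theta(\rho)\varepsilon+C\rho^{-1/2}\delta$ with $\varepsilon=\|\Lambda_q-\Lambda_{q_0}\|$, and choosing $\rho\sim c\,|\ln\varepsilon|$ to balance the exponential factor against $\varepsilon$ produces exactly the rate $|\ln(C^{-1}\varepsilon)|^{-1/2}$ of \eqref{est1}.

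For the damping estimate \eqref{est2} the roles are played by velocity data: with $u_1=\psi_k$ one has $\partial_t u_{q_0,0}(\cdot,t)=\cos(\sqrt{\mu_k}\,t)\psi_k$, and $w=u_{q_0,a}-u_{q_0,0}$ solves the $q_0$-wave equation with source $-a\,\partial_t u_{q_0,a}$ and vanishing Cauchy data, its Neumann trace being $(\tilde\Lambda_{q_0,a}-\tilde\Lambda_{q_0,0})\psi_k$. Since $\|a\|_\infty\le\delta$ is small, I would linearize: to leading order the source is $-a\cos(\sqrt{\mu_k}\,t)\psi_k$, so Duhamel's formula and the spectral expansion of $w$ expose the moments $\int_\Omega a\,\psi_k\psi_l\,dx$ through the boundary data, the quadratic remainder being absorbed by $\delta$. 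Recovering $P_\rho a$ from these moments, estimating the tail by $\|a\|_{1,2}\le m$ (the bound $a\in mB_{1,2}$), and optimizing in $\rho$ as above gives \eqref{est2}. For the joint estimate \eqref{est3} I would split $\Lambda_{q,a}-\Lambda_{q_0,0}$ by applying it to $(u_0,0)$ and to $(0,u_1)$: the displacement part is governed by $q-q_0$ up to an $O(\delta)$ perturbation coming from the now nonzero $a$, and the velocity part by $a$ up to an $O(\delta)$ perturbation coming from $q-q_0$; both partial norms are dominated by $\|\Lambda_{q,a}-\Lambda_{q_0,0}\|$, so the two previous arguments run with the cross terms absorbed by the smallness of $\delta$, and adding them gives \eqref{est3}.

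The hard part is the spectral stability step, i.e. establishing the bound $\|P_\rho(q-q_0)\|_2\le\Theta(\rho)\,\|\Lambda_q-\Lambda_{q_0}\|$ with an explicit $\Theta$. It requires (i) converting, via exact observability, boundary energy into interior energy coercively on the finite-dimensional spectral subspaces; (ii) separating the frequencies $\sqrt{\lambda_k}$ over the fixed interval $(0,\tau)$, which is where the clustering of eigenvalues forces the exponential growth of $\Theta(\rho)$ and hence the merely logarithmic — rather than H\"older — rate; and (iii) controlling the discrepancy between the eigensystems of $A_q$ and $A_{q_0}$ together with the nonlinear remainders, for which the smallness parameter $\delta$ is essential. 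Balancing this exponential amplification against the $\rho^{-1/2}$ smoothness tail is precisely what yields the exponent $1/2$ in \eqref{est1}--\eqref{est3}.
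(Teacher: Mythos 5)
Your overall scaffolding (moment estimates on a low-frequency band, tail controlled by the a priori $W^{1,\infty}$ or $H^1$ bound, then balancing an exponential amplification $e^{C\lambda}$ against $\lambda^{-1/2}$) matches the paper, but the core of your argument is left unproved and, in one place, does not close. The paper never recovers boundary spectral data of $A_q$ at all. Its mechanism is: take $\phi_k$ an eigenfunction of the \emph{reference} operator $-\Delta+q_0$, observe that $u=u_q-u_{q_0}$ with data $(\phi_k,0)$ solves the wave equation with coefficient $q$ and the exactly separated source $-(q-q_0)\cos(t\sqrt{\lambda_k})\,\phi_k$, and apply a Lipschitz stability estimate for the inverse \emph{source} problem (Theorem \ref{theorem2.1}, Corollary \ref{corollary2.1}): exact observability plus inversion of the time convolution $\int_0^t\lambda(t-s)\cdot\,ds$ as a Volterra equation of the second kind yields
\begin{equation*}
|(q-q_0,\phi_k)|^2\leq Ce^{C\lambda_k}\|\Lambda_{q_0}-\Lambda_q\|^2
\end{equation*}
directly, with the explicit constant coming from $\lambda(t)=\cos(t\sqrt{\lambda_k})$, $\lambda(0)=1$, $\|\lambda'\|^2_{L^2}\leq\tau\lambda_k$. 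Your route instead asks to extract eigenvalue differences $|\lambda_k-\mu_k|$ and Neumann-trace differences from $\Lambda_q-\Lambda_{q_0}$ on a finite time window with an explicit $\Theta(\rho)$, and then to run a Borg--Levinson argument via completeness of products of eigenfunctions. That spectral stability step is precisely the hard content of such a theorem (compare the quantitative Borg--Levinson results cited in the paper), and "exact observability prevents the traces from degenerating'' is not a proof of it; as written, \eqref{est1} rests on an assertion at least as deep as the result itself.

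The second genuine gap is your treatment of \eqref{est2}. You linearize the source $-a\,\partial_t u_{q_0,a}$ to $-a\cos(\sqrt{\mu_k}t)\psi_k$ and claim the quadratic remainder $-a\,\partial_t w$ is "absorbed by $\delta$''. It cannot be: the remainder is of size $O(\|a\|_\infty^2)$ with $\delta$ \emph{fixed}, while the moment estimates carry the amplification $e^{C\lambda}$ and $\lambda\to\infty$ as $\|\tilde\Lambda_{q_0,a}-\tilde\Lambda_{q_0,0}\|\to 0$; the leftover term $Ne^{C\lambda}\delta^2\|a\|_\infty^2$ neither vanishes with the data error nor can be absorbed into $\|a\|_2^2$. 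The paper needs no linearization: keeping $a\,\partial_t u$ on the left-hand side, the difference $u=u_{q_0,a}-u_{q_0,0}$ solves the \emph{damped} wave equation with the exact separated source $-a\cos(t\sqrt{\lambda_k})\phi_k$, and observability of the perturbed generator (Theorem \ref{theorem2.2}, valid for $\|\mathcal{B}_{q,a}\|\leq\delta$) applies verbatim. Likewise for \eqref{est3}, instead of your splitting with cross terms "absorbed by smallness of $\delta$'', the paper uses the complex data $u_0=\phi_k$, $u_1=i\sqrt{\lambda_k}\phi_k$, whose unperturbed solution is $e^{i\sqrt{\lambda_k}t}\phi_k$, so that the single source $-[(q-q_0)+i\sqrt{\lambda_k}a]e^{i\sqrt{\lambda_k}t}\phi_k$ exposes both coefficients at once and real/imaginary separation replaces any absorption argument. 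I recommend you rebuild your proof around the inverse source problem rather than around spectral data.
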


%\begin{remark}\label{remark1.1}
%Examining the proof of Theorem \ref{mt1}, we see that \eqref{est1} can be substituted by the  following global stability estimate
%\[
%\|q-q_0\|_2\leq C\left( \left| \ln \left(C^{-1} \|\Lambda _{q_0}-\Lambda_q\|\right)\right|^{-1/2}+\|\Lambda _{q_0}-\Lambda_q\|\right),\;\; q\in q_0+W^{1,\infty}(\Omega ).
%\]
%We can do the same remark for \eqref{est2} and \eqref{est3}.
%\end{remark}

Theorem \ref{mt1} gives only stability estimates at zero damping coefficient. The difficulty of stability estimates at a non zero damping coefficient is related to the fact that the operator $\mathcal{A}$ is not necessarily diagonalizable. The main reason is that, contrary to case where $a=0$, this operator is no longer skew-adjoint. We detail the stability estimate at a non zero damping coefficient in a separate section.

\medskip
The problem of determining the potential in a wave equation from the so-called Dirichlet-to-Neumann (usually abbreviated to DN) map was initiated by Rakesh and Symes \cite{RS} (see also \cite{CM} and \cite{Is}). They prove that the potential  can be recovered uniquely from the DN map provided that the length of the time interval is larger than the diameter of the space domain. The key point in their method is the construction of special solutions, called beam solutions.   A sharp uniqueness result was proved  by the so-called boundary control method. More details on this method can be found for instance in \cite {B} and \cite{KKL}. Also,  Sun \cite{Su} establishes H\"older stability estimates and, most recently, Bao and Yun \cite{BY} improve the result of \cite{Su}. Specifically, they prove a nearly Lipschitz stability estimate. An extension was obtained by Bellassoued, Choulli and Yamamoto \cite{BCY} in the case of a partial DN map by a method built on the quantification of the continuation of the solution of the wave equation from partial Cauchy data. We refer to the introduction of \cite{BCY} for a short overview of inverse problems related to the wave equation. We finally quote a very recent  paper by Bao and Zhang \cite{BZ} dealing with sensitivity analysis of an inverse problem for the wave equation with caustics.

\medskip
It is worthwhile to mention that contrary to hyperbolic inverse problems, for which the stability can be of Lipschitz, H\"older or logarithmic type, elliptic and parabolic inverse problems are always severely ill-posed. That is the corresponding stability estimates are in most cases of logarithmic type. In \cite{Al}, Alessandrini gives an example in non destructive testing showing that the logarithmic stability is the best possible.

\medskip
This text is organized as follows. We consider in Section 2 the  inverse source problem for exactly observable systems in an abstract framework. This material is necessary to establish stability estimates for the determination of the potential and the damping coefficient appearing in the IBVP \eqref{wd}. We devote Section 3  to the proof of Theorem \ref{mt1} and we give in Section 4 a sufficient condition which guarantees  that $\mathcal{A}$ is diagonalizable. The condition that $\mathcal{A}$ is diagonalizable is used in an essential way  to get a variant of Theorem \ref{mt1} at a non zero damping coefficient. We apply in Section 5 our approach to a clamped Euler-Bernoulli beam equation. The possible adaptation of our method to a heat equation is discussed in Section 6. Due to the fact that a heat equation is not exactly observable but only observable at final time, we obtain a stability estimate only when we perturb the unknown coefficient by a finite dimensional subspace.

%==============================

\section{An abstract framework for the inverse source problem} \label{back}

Let $H$ be a Hilbert space  and $A :D(A)  \subset H \rightarrow H$ be the generator of continuous semigroup $T(t)$. An operator  $C \in \mathscr{B}(D(A),Y)$,  $Y$ is a Hilbert space which is identified with its dual space, is called an admissible observation  for $T(t)$ if for some (and hence for all) $\tau >0$, the operator $\Psi \in \mathscr{B}(D(A),L^2((0,\tau ),Y))$ given by
\[
(\Psi  x)(t)=CT(t)x,\;\; t\in [0,\tau ],\;\; x\in D(A),
\]
has a bounded extension to $H$.

\smallskip
We  introduce the notion of exact observability for the system
\begin{align}\label{2.1}
&z'(t)=Az(t),\;\; z(0)=x,
\\
&y(t)=Cz(t),\label{2.2}
\end{align}
where $C$ is an admissible observation  for $T(t)$. Following the usual definition, the pair $(A,C)$ is said exactly observable at time $\tau >0$ if there is a constant $\kappa $ such that the solution $(z,y)$ of \eqref{2.1} and \eqref{2.2} satisfies 
\[
\int_0^\tau \|y(t)\|_Y^2dt\geq \kappa ^2 \|x\|_H^2,\;\; x\in D(A).
\]
Or equivalently
\begin{equation}\label{2.3}
\int_0^\tau \|(\Psi  x)(t)\|_Y^2dt\geq \kappa ^2 \|x\|_H^2,\;\; x\in D(A).
\end{equation}

\smallskip
We consider the Cauchy problem
\begin{equation}\label{2.4}
z'(t)=Az(t)+\lambda (t)x,\;\; z(0)=0,
\end{equation}
and we set
\begin{equation}\label{2.5}
y(t)=Cz(t),\;\; t\in [0,\tau ].
\end{equation}

By Duhamel's formula, we have
\begin{equation}\label{2.6}
y(t)=\int_0^t \lambda (t-s)CT(s)xds=\int_0^t\lambda (t-s)(\Psi  x)(s)ds.
\end{equation}

Let
\[
H^1_\ell ((0,\tau), Y) = \left\{u \in H^1((0,\tau), Y); \; u(0) = 0 \right\}.
\]

We define the operator $S:L^2((0,\tau), Y)\longrightarrow H^1_\ell ((0,\tau ) ,Y)$ by
\begin{equation}\label{2.7}
(Sh)(t)=\int_0^t\lambda (t-s)h(s)ds.
\end{equation}

If $E =S\Psi$, then \eqref{2.6} takes the form
\[
y(t)=(E x)(t).
\]

\begin{theorem}\label{theorem2.1}
We assume that $(A,C)$ is exactly observable for $\tau \geq \tau_0$, for some $\tau_0>0$. Let $\lambda \in H^1((0,T))$ satisfies $\lambda (0)\ne 0$. Then $E$ is one-to-one from $H$ onto $H^1_\ell ((0,\tau), Y)$ and
\begin{equation}\label{2.8}
\frac{\kappa |\lambda (0)|}{\sqrt{2}}e^{-\tau \frac{\|\lambda '\|^2_{L^2((0,\tau))}}{|\lambda (0)|^2 }}\|x\|_H\leq  \|Ex\|_{H^1_\ell ((0,\tau), Y)},\;\; x\in H.
\end{equation}
\end{theorem}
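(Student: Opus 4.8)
The plan is to reduce the lower bound \eqref{2.8} to the exact observability inequality \eqref{2.3} for $\Psi$ by a Volterra/Gronwall argument applied to the time derivative of $Ex$. Writing $g=\Psi x$, so that $(Ex)(t)=\int_0^t\lambda(t-s)g(s)\,ds$, the first step is to differentiate this convolution. Since $\lambda\in H^1((0,\tau))$ and $g\in L^2((0,\tau),Y)$, Leibniz' rule gives
\[
(Ex)'(t)=\lambda(0)g(t)+\int_0^t\lambda'(t-s)g(s)\,ds,\qquad t\in(0,\tau).
\]
Because $(Ex)(0)=0$, the $H^1_\ell$-norm dominates the $L^2$-norm of the derivative, so it suffices to bound $\|(Ex)'\|_{L^2((0,\tau),Y)}$ from below by $\|x\|_H$. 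The hypothesis $\lambda(0)\neq 0$ is exactly what makes the leading term $\lambda(0)g$ nondegenerate and drives the argument.

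Next I would solve this identity for $g$ and control the memory term. Setting $F(t)=\int_0^t\|g(s)\|_Y^2\,ds$, the relation $\lambda(0)g(t)=(Ex)'(t)-\int_0^t\lambda'(t-s)g(s)\,ds$ together with the elementary bound $\|a+b\|^2\le 2\|a\|^2+2\|b\|^2$ and Cauchy--Schwarz, using $\int_0^t|\lambda'(t-s)|^2\,ds\le\|\lambda'\|_{L^2((0,\tau))}^2$, yields the pointwise estimate
\[
\lambda(0)^2F'(t)\le 2\|(Ex)'(t)\|_Y^2+2\|\lambda'\|_{L^2((0,\tau))}^2\,F(t).
\]
This is a linear differential inequality for $F$ with $F(0)=0$. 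I would then integrate it using the integrating factor $e^{-2\|\lambda'\|_{L^2((0,\tau))}^2 t/\lambda(0)^2}$; after integrating over $(0,\tau)$ and discarding the favorable weight on the right-hand side, this gives
\[
F(\tau)\le\frac{2}{\lambda(0)^2}\,e^{2\tau\|\lambda'\|_{L^2((0,\tau))}^2/\lambda(0)^2}\int_0^\tau\|(Ex)'(t)\|_Y^2\,dt.
\]

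Finally I invoke exact observability \eqref{2.3} in the form $F(\tau)=\int_0^\tau\|(\Psi x)(t)\|_Y^2\,dt\ge\kappa^2\|x\|_H^2$, together with $\int_0^\tau\|(Ex)'(t)\|_Y^2\,dt\le\|Ex\|_{H^1_\ell((0,\tau),Y)}^2$. Taking square roots produces precisely the constant $\tfrac{\kappa|\lambda(0)|}{\sqrt2}\,e^{-\tau\|\lambda'\|_{L^2((0,\tau))}^2/\lambda(0)^2}$, which is \eqref{2.8}. The injectivity of $E$ is then immediate, since \eqref{2.8} shows that $E$ is bounded below, hence one-to-one with closed range (a homeomorphism onto its range); for the mapping property onto $H^1_\ell$ I would factor $E=S\Psi$ and note that, as $\lambda(0)\neq 0$, the defining relation $Sh=u$ is after differentiation a Volterra equation of the second kind, so $S$ is boundedly invertible from $L^2((0,\tau),Y)$ onto $H^1_\ell((0,\tau),Y)$, which reduces the range question for $E$ to that of $\Psi$. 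The main obstacle I anticipate is purely in the bookkeeping of the Gronwall step: selecting the integrating factor so that the exponential constant comes out exactly as stated, and being careful to use only the derivative part of the $H^1_\ell$-norm, which is legitimate precisely because $Ex$ vanishes at $t=0$. The convolution differentiation and the Cauchy--Schwarz estimate of the memory term are routine once the assumptions $\lambda\in H^1$ and $\lambda(0)\neq 0$ are in force.
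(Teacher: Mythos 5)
Your proposal is correct and follows essentially the same route as the paper: differentiate the convolution to obtain a Volterra equation of the second kind, estimate the memory term via the convexity inequality and Cauchy--Schwarz, close the estimate with Gronwall (your integrating-factor step is exactly the Gronwall argument), and then invoke the exact observability inequality \eqref{2.3}; the resulting constant matches \eqref{2.8} precisely. Your treatment of injectivity and of the mapping property of $E=S\Psi$ (via bounded invertibility of $S$ coming from the second-kind Volterra structure) likewise mirrors the paper's argument.
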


\begin{proof}
First, taking the derivative with respect to $t$ of each side of the integral equation
\[
\int_0^t \lambda (t-s)\varphi (s) ds=\psi (t),
\]
we get a Volterra equation of second kind
\[
\lambda (0)\varphi (t) +\int_0^t\lambda '(t-s)\varphi (s)ds=\psi '(t).
\]
Mimicking the proof of  \cite[Theorem 2, page 33]{Ho}, we obtain that this integral equation has a unique solution $\varphi \in L^2((0,\tau ) ,Y)$ and
\begin{align*}
\|\varphi \|_{L^2((0,\tau ),Y)}&\leq C \|\psi '\|_{L^2((0,\tau ),Y)}\\ &\leq C \|\psi \|_{H^1_\ell ((0,\tau ),Y)}.
\end{align*}
Here $C =C(\lambda )$ is a constant. 

\smallskip
Next, we estimate the constant $C$ above. From the elementary convexity inequality $(a+b)^2\leq 2(a^2+b^2)$, we deduce
\[
\| |\lambda (0)|\varphi (t)\|_Y^2\leq 2\left( \int_0^t\frac{|\lambda '(t-s)}{|\lambda (0)|}\left[|\lambda (0)|\| \varphi (s)\|_Y\right]ds \right)^2+2\|\psi '(t)\|_Y^2.
\]
Thus,
\[
|\lambda (0)|^2\| \varphi (t)\|_Y^2\leq 2\frac{\|\lambda '\|_{L^2((0,\tau))}^2}{|\lambda (0)|^2}\int_0^t|\varphi (0)|^2\| \varphi (s)\|_Y^2ds +2\|\psi '(t)\|_Y^2
\]
by the Cauchy-Schwarz's inequality. Therefore, using  Gronwall's lemma, we obtain in a straightforward manner that
\[
\| \varphi \|_{L^2((0,\tau ),Y)}\leq \frac{\sqrt{2}}{|\lambda (0)|}e^{\tau \frac{\|\lambda '\|_{L^2((0,\tau))}^2}{|\lambda (0)|^2}}\|\psi '\|_{L^2((0,\tau ),Y)}
\]
and then
\[
\| \varphi \|_{L^2((0,\tau ),Y)}\leq \frac{\sqrt{2}}{|\lambda (0)|}e^{\tau\frac{\|\lambda '\|_{L^2((0,\tau))}^2}{|\lambda (0)|^2}}\|S\varphi \|_{H^1_\ell ((0,\tau ),Y)}.
\]
In light of \eqref{2.3}, we end up getting
\[
\|Ex\|_{H^1_\ell ((0,\tau), Y)}\geq \frac{\kappa |\lambda (0)|}{\sqrt{2}}e^{-\tau \frac{\|\lambda '\|^2_{L^2((0,\tau))}}{|\lambda (0)|^2 }} \|x\|_H.
\]
\end{proof}

\medskip
We shall need a variant of Theorem \ref{theorem2.1}. If $(A,C)$ is as in Theorem \ref{theorem2.1}, then it follows from \cite[Proposition 6.3.3, page 189]{tucsnakweiss} that there is $\delta >0$ such that for any $P\in \mathscr{B}(H)$ satisfying $\|P\|\leq \delta$, $(A+P,C)$ is exactly observable with $\kappa  (P+A)\geq \kappa/2$.

\smallskip
We define $E ^P$ similarly to $E$ by replacing  $A$ by $A+P$. 

\begin{theorem}\label{theorem2.2}
We assume that $(A,C)$ is exactly observable for $\tau \geq \tau_0$, for some $\tau_0>0$. Let $\lambda \in H^1((0,T))$ satisfies $\lambda (0)\ne 0$. There is $\delta >0$ such that, for any $P\in \mathscr{B}(H)$ satisfying $\|P\|\leq \delta$, $E^P$ is one-to-one from $H$ onto $H^1_\ell ((0,\tau), Y)$ and
\begin{equation}\label{2.10}
\frac{\kappa |\lambda (0)|}{2\sqrt{2}}e^{-\tau \frac{\|\lambda '\|^2_{L^2((0,\tau))}}{|\lambda (0)|^2 }}\|x\|_H\leq  \|E ^Px\|_{H^1_\ell ((0,\tau), Y)},\;\; x\in H.
\end{equation}
\end{theorem}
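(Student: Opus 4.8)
The plan is to treat Theorem~\ref{theorem2.2} as a direct robustness consequence of Theorem~\ref{theorem2.1}: the perturbed operator $E^P$ has exactly the same structure $E^P=S\Psi^P$, where $\Psi^P$ is the admissible observation map associated with the semigroup generated by $A+P$, so the entire argument of Theorem~\ref{theorem2.1} can be replayed with $(A,C)$ replaced by $(A+P,C)$. The decisive point is that the only place where the dynamics enters the proof of Theorem~\ref{theorem2.1} is the exact observability inequality \eqref{2.3}; everything else, namely the reduction of $Sh=\psi$ to a Volterra equation of the second kind, the convexity estimate, the Cauchy--Schwarz bound and the Gronwall step, involves only $\lambda$ and the Volterra operator $S$, and is therefore completely insensitive to which operator generates the semigroup.

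First I would invoke the remark preceding the statement: by \cite[Proposition 6.3.3, page 189]{tucsnakweiss} there is $\delta>0$ such that for every $P\in\mathscr{B}(H)$ with $\|P\|\leq\delta$ the pair $(A+P,C)$ is again exactly observable at time $\tau\geq\tau_0$, with observability constant $\kappa(A+P)\geq\kappa/2$. In particular $C$ remains an admissible observation for the semigroup generated by $A+P$, so that $\Psi^P\in\mathscr{B}(H,L^2((0,\tau),Y))$ and hence $E^P=S\Psi^P$ is well defined on all of $H$.

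Next I would apply Theorem~\ref{theorem2.1} verbatim to the pair $(A+P,C)$. This yields at once that $E^P$ is one-to-one from $H$ onto $H^1_\ell((0,\tau),Y)$ and gives the lower bound \eqref{2.8} with $\kappa$ replaced by the perturbed constant $\kappa(A+P)$. Since the right-hand side of \eqref{2.8} is increasing in the observability constant, and since $\kappa(A+P)\geq\kappa/2$, I may replace $\kappa(A+P)$ by $\kappa/2$, which produces exactly the factor $\kappa|\lambda(0)|/(2\sqrt{2})$ appearing in \eqref{2.10}. The exponential factor is unchanged, as it depends only on $\lambda$, $\tau$ and $|\lambda(0)|$, and not on the generator.

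There is no genuine analytic obstacle here; the only point requiring care is that the perturbation preserve admissibility and exact observability \emph{uniformly} over the ball $\|P\|\leq\delta$, so that $\Psi^P$ is bounded with the same function-space target and $E^P$ is onto $H^1_\ell((0,\tau),Y)$ with a lower bound independent of $P$. This is precisely the content of the cited perturbation result, which guarantees not a mere pointwise estimate but the exact observability of $(A+P,C)$ together with the uniform bound $\kappa(A+P)\geq\kappa/2$. Once this is in hand, the conclusion is immediate from Theorem~\ref{theorem2.1}.
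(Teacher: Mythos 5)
Your proposal is correct and matches the paper's intended argument exactly: the paper gives no separate proof of Theorem~\ref{theorem2.2}, relying precisely on the remark that \cite[Proposition 6.3.3, page 189]{tucsnakweiss} yields exact observability of $(A+P,C)$ with constant $\kappa(A+P)\geq\kappa/2$ for $\|P\|\leq\delta$, after which Theorem~\ref{theorem2.1} applied to $(A+P,C)$ gives \eqref{2.10} with the factor $\kappa/2$ in place of $\kappa$. Your additional observation that the Volterra/Gronwall part of the argument is independent of the generator is exactly why the paper could omit the proof.
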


\medskip
We now apply the preceding theorem to the following IBVP for the wave equation
\begin{equation}\label{2.9}
\left\{
\begin{array}{lll}
 \partial _t^2 u - \Delta u + q(x)u + a(x) \partial_t u = \lambda (t)f(x) \;\;  &\mbox{in}\;   Q,
 \\
u = 0 &\mbox{on}\;  \Sigma, 
\\
u(\cdot ,0) = 0,\; \partial_t u (\cdot ,0) = 0.
\end{array}
\right.
\end{equation}

We recall that
\[ 
\mathcal{A}_0=\left( 
\begin{array}{cc}
0 & I  \\
\Delta  & 0  \\
 \end{array} 
 \right),\;\; D(\mathcal{A}_0)=\left[H^2(\Omega )\cap H_0^1(\Omega )\right]\times H_0^1(\Omega )
 \]
 and $\mathcal{A}=\mathcal{A}_{q,a}=\mathcal{A}_0+\mathcal{B}_{q,a}$ with $D(\mathcal{A})=D(\mathcal{A}_0)$, where
 \[ 
\mathcal{B}_{q,a}=\left( 
\begin{array}{cc}
0 & 0 \\
-q & -a  \\
 \end{array} 
 \right).
 \]
Also
\[
\mathcal{C}: D(\mathcal{A}_0)\rightarrow L^2(\Gamma ): \left(\begin{array}{cc}\varphi \\ \psi \end{array}\right)\longrightarrow \partial _\nu\varphi .
\]

We fix $q_0,a_0\in L^\infty (\Omega )$ and we assume that $(\mathcal{A}_{q_0,a_0},\mathcal{C})$ is exactly observable with constant $\kappa $. This is the case when $\Gamma \supset\Gamma _0 =\{x\in \partial \Omega ;\; \nu (x)\cdot (x-x_0)> 0\}$ (see for instance \cite[Theorem 1.2, page 141]{FI}\footnote{We note that from the proof of this theorem it is not possible to extract the dependance of $\kappa $ on $q_0$ and $a_0$.}).

\begin{corollary}\label{corollary2.1}
There is $\delta >0$ such that, for any $q\in q_0+\delta B_{1,\infty}$ and $a\in a_0+\delta B_\infty $, we have  
\[
\|f\|_2\leq \frac{2\sqrt{2}}{\kappa |\lambda (0)|}e^{\tau \frac{\|\lambda '\|_{L^2((0,\tau))}^2}{|\lambda (0)|^2}}\| \partial _\nu u_f\|_{H^1((0,\tau ),L^2(\Gamma ))},
\]
where $u_f$ is the solution of the IBVP \eqref{2.9}.
\end{corollary}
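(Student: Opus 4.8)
The plan is to view the IBVP \eqref{2.9} as a first-order Cauchy problem and read the estimate off from Theorem \ref{theorem2.2}. Setting $z=\left(\begin{array}{c}u_f\\ \partial_t u_f\end{array}\right)$, a direct computation shows that $z$ solves $z'(t)=\mathcal{A}_{q,a}z(t)+\lambda(t)x$, $z(0)=0$, with $x=\left(\begin{array}{c}0\\ f\end{array}\right)\in H:=H_0^1(\Omega)\times L^2(\Omega)$ and observation $y(t)=\mathcal{C}z(t)=\partial_\nu u_f(\cdot,t)|_\Gamma$. Writing $\mathcal{A}_{q,a}=\mathcal{A}_{q_0,a_0}+P$ with $P=\mathcal{B}_{q-q_0,\,a-a_0}$, I would place myself exactly in the situation of Theorem \ref{theorem2.2} with $A=\mathcal{A}_{q_0,a_0}$, $Y=L^2(\Gamma)$, so that $y=E^Px$.

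The one point requiring verification is that $P$ is a bounded perturbation of $H$ whose norm can be made as small as the threshold $\delta$ furnished by Theorem \ref{theorem2.2} (equivalently by \cite[Proposition 6.3.3]{tucsnakweiss}). Since $P\left(\begin{array}{c}\varphi\\ \psi\end{array}\right)=\left(\begin{array}{c}0\\ -(q-q_0)\varphi-(a-a_0)\psi\end{array}\right)$, the multiplication bounds $\|(q-q_0)\varphi\|_0\le \|q-q_0\|_\infty\|\varphi\|_0$ and $\|(a-a_0)\psi\|_0\le \|a-a_0\|_\infty\|\psi\|_0$ yield $\|P\|_{\mathscr{B}(H)}\le C(\|q-q_0\|_\infty+\|a-a_0\|_\infty)$. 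Only the $L^\infty$ parts of the norms intervene here, so the hypotheses $q\in q_0+\delta B_{1,\infty}$ and $a\in a_0+\delta B_\infty$ are more than enough. Fixing first the threshold $\delta_0$ attached by Theorem \ref{theorem2.2} to $A=\mathcal{A}_{q_0,a_0}$, it then suffices to choose $\delta$ so small that $2C\delta\le\delta_0$; for such $q,a$ one has $\|P\|\le\delta_0$, whence $E^P$ is one-to-one and satisfies the lower bound \eqref{2.10}.

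It then remains to identify the two sides of \eqref{2.10}. On the one hand $\|x\|_H=\|f\|_0=\|f\|_2$. On the other hand $E^Px=y=\partial_\nu u_f$ restricted to $\Upsilon=\Gamma\times(0,\tau)$, and since $u_f(\cdot,0)=0$ we get $y(0)=\partial_\nu u_f(\cdot,0)|_\Gamma=0$, so $y$ indeed lies in $H^1_\ell((0,\tau),L^2(\Gamma))$; the $H^1$-in-time regularity of $\partial_\nu u_f$ is obtained by differentiating \eqref{2.9} in $t$ (the function $\partial_t u_f$ solves \eqref{2.9} with source $\lambda' f$ and initial velocity $\lambda(0)f\in L^2(\Omega)$) and invoking the hidden-regularity result \cite{LLT} recalled in the introduction. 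Substituting these identifications into \eqref{2.10} and dividing by the prefactor gives precisely $\|f\|_2\le \frac{2\sqrt2}{\kappa|\lambda(0)|}\,e^{\tau\|\lambda'\|_{L^2((0,\tau))}^2/|\lambda(0)|^2}\,\|\partial_\nu u_f\|_{H^1((0,\tau),L^2(\Gamma))}$.

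I expect the main (and essentially only) obstacle to be the smallness-of-$P$ step, that is, guaranteeing that the perturbation stays within the perturbation radius of Theorem \ref{theorem2.2} so that exact observability — and hence the lower bound — survives the passage from $\mathcal{A}_{q_0,a_0}$ to $\mathcal{A}_{q,a}$. Once $\|P\|\le\delta_0$ is secured, the corollary is a direct transcription of \eqref{2.10}.
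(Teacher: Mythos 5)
Your proposal is correct and follows exactly the route the paper intends: the corollary is a direct application of Theorem \ref{theorem2.2} with $A=\mathcal{A}_{q_0,a_0}$, $P=\mathcal{B}_{q-q_0,\,a-a_0}$, $x=\left(\begin{array}{c}0\\ f\end{array}\right)$, and $E^Px=\partial_\nu u_f|_\Upsilon$, the only point to verify being the smallness of $\|P\|_{\mathscr{B}(H)}$ in terms of $\|q-q_0\|_\infty+\|a-a_0\|_\infty$, which you handle correctly. Your identifications of $\|x\|_H=\|f\|_2$ and of the $H^1$-in-time regularity of $\partial_\nu u_f$ (via differentiating in $t$ and the hidden regularity of \cite{LLT}) match the paper's setup, so nothing is missing.
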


This is nothing else but a Lipschitz stability estimate for the inverse problem of determining the source term $f$ from the boundary data $\partial _\nu u _f{_{|\Upsilon}}$, when $\lambda$ is supposed to be known.

%==================================================

\section{Proof of Theorem \ref{mt1}} 

\begin{proof}[Proof of Theorem \ref{mt1}]
Let $(\lambda _k)$ and $(\phi _k)$ be respectively the sequence of Dirichlet eigenvalues of $-\Delta +q_0$, counted according to their multiplicity, and the corresponding eigenvectors. We assume that the sequence $(\phi _k)$ forms an orthonormal basis of $L^2(\Omega )$.

\smallskip
We recall that according to the min-max principle, the following two-sided estimates hold
\begin{equation}\label{ineq1}
c^{-1}k^{2/n}\leq \lambda _k\leq ck^{2/n}.
\end{equation}
Here, the constant $c>1$ depends only on $\Omega$ and $q_0$.

\smallskip
Let $u_q$ be the solution of the IBVP \eqref{wd} corresponding to $q$, $a=0$, $u_0=\phi _k$ and $u_1=0$. Taking into account that $u_{q_0}=\cos (t\sqrt{\lambda _k})\phi _k$ is the solution of the IBVP \eqref{wd} corresponding to $q=q_0$, $a=0$, $u_0=\phi _k$ and $u_1=0$, we see that $u=u_q -u_{q_0}$ is the solution of the IBVP
\begin{equation}\label{e2}
\left\{
\begin{array}{lll}
\partial _t^2u - \Delta u + qu = -(q-q_0) \cos(t \sqrt{\lambda_k}) \phi_k \;\; &\mbox{in}\;  Q, 
\\
u= 0 &\mbox{on}\; \Sigma , 
\\
u(\cdot ,0) = 0,\; \partial _tu (\cdot ,0) = 0.
\end{array}
\right.
\end{equation}

In the remaining part of this proof, $C$ is a generic constant independent on $k$.

\smallskip
Let $\delta$ be as in Corollary \ref{corollary2.1}. If $q\in q_0+\delta B_{1,\infty}$, we get by applying Corollary \ref{corollary2.1}
\[
\|(q-q_0) \phi_k\|_2\leq Ce^{C\lambda _k}\|\partial _\nu u\|_{H^1 ((0,\tau),L^2 (\Gamma))}.
\]
Since $|(q-q_0,\phi_k)|\leq |\Omega |^{1/2}\|(q-q_0) \phi_k\|_{L^2(\Omega )}$ by Cauchy-Schwarz's inequality, the last inequality entails
\[
|(q-q_0,\phi_k)|\leq Ce^{C\lambda _k}\|\partial _\nu u\|_{H^1 ((0,\tau),L^2 (\Gamma))}.
\]
But, $\partial _\nu u=(\Lambda _{q_0}-\Lambda_q)\phi _k$. Therefore 
\begin{equation}\label{e1}
|(q-q_0,\phi_k)|^2\leq Ce^{C\lambda _k}\| \Lambda _{q_0}-\Lambda_q\|^2.
\end{equation}

Let $\lambda \geq \lambda _1$ and $N=N(\lambda )$ be the smallest integer so that $\lambda _N\leq \lambda <\lambda _{N+1}$. Then
\begin{align*}
\|q-q_0\|_2^2 &=\sum_k |(q-q_0,\phi_k)|^2
\\
&=\sum_{k\leq N} |(q-q_0,\phi_k)|^2+\sum_{k>N}|(q-q_0,\phi_k)|^2
\\
&\leq \sum_{k\leq N} |(q-q_0,\phi_k)|^2+\frac{1}{\lambda }\sum_{k>N}\lambda _k |(q-q_0,\phi_k)|^2
\\
&\leq \sum_{k\leq N} |(q-q_0,\phi_k)|^2+\frac{C\delta^2}{\lambda }.
\end{align*}
Here we used the fact that $\left( \sum_{k\geq 1} (1+\lambda _k)(\cdot ,\varphi _k)^2\right)^{1/2}$ defines an equivalent norm on $H^1(\Omega )$.

In light of \eqref{e1}, we get
\[
\|q-q_0\|_2^2\leq CNe^{C\lambda }\| \Lambda _{q_0}-\Lambda_q\|^2+\frac{C\delta^2}{\lambda }.
\]
By \eqref{ineq1}, $N\leq C\lambda ^{n/2}$. Hence
\[
\|q-q_0\|_2^2\leq Ce^{C\lambda }\| \Lambda _{q_0}-\Lambda_q\|^2+\frac{C\delta^2}{\lambda }.
\]
Minimizing with respect to $\lambda$, we obtain that there is $\delta _0>0$ such that if $\| \Lambda _{q_0}-\Lambda_q\|\leq \delta _0$, then
\[
\|q-q_0\|_2\leq C\left| \ln (C^{-1} \|\Lambda _{q_0}-\Lambda_q\|)\right|^{-1/2}.
\]
Estimate \eqref{est1} follows then from the continuity of the mapping \[ q\in L^\infty (\Omega )\rightarrow \Lambda _q \in  \mathscr{B}(H_0^1(\Omega )\cap H^2(\Omega ),H^1((0,\tau ),L^2(\Gamma )).\]

\medskip
We proceed similarly for proving \eqref{est2}. In the actual case we have to replace the previous $u_{q_0}$ by $u_{q_0}=\lambda _k^{-1}\sin (t\sqrt{\lambda _k})\phi _k$, corresponding to the initial conditions $u_0=0$ and $u_1=\phi _k$. Therefore, we have in place of \eqref{e2}
\begin{equation}\label{e3}
\left\{
\begin{array}{lll}
\partial _t^2u - \Delta u + a \partial_t u = -a \cos(t \sqrt{\lambda_k}) \phi_k \;\; &\mbox{in}\;  Q, 
\\
u= 0 &\mbox{on}\; \Sigma , 
\\
u(\cdot ,0) = 0,\; \partial _tu (\cdot ,0) = 0.
\end{array}
\right.
\end{equation}
We continue as in the preceding case by establishing  the estimate
\[
|(a,\phi_k)|^2\leq Ce^{C\lambda _k}\| \widetilde{\Lambda} _{q_0,a}-\widetilde{\Lambda} _{q_0,0}\|
\]
and we complete the proof of \eqref{est2} as above.

\medskip
We end the proof by showing how we proceed for proving \eqref{est3}. Taking into account that the solution corresponding to $q=q_0$, $a=0$, $u_0=\phi _k$ and $u_1=i\lambda _k\phi _k$ is $u_{q_0}=e^{i\sqrt{\lambda _k}t}\phi _k$, then in place of \eqref{e2} we have the following IBVP
\begin{equation}\label{e4}
\left\{
\begin{array}{lll}
\partial _t^2u - \Delta u + qu +a\partial _tu = -[(q-q_0)+ i\sqrt{\lambda _k}a]e^{i\sqrt{\lambda _k}t} \phi_k, \;\; &\mbox{in}\;  Q, 
\\
u= 0 \;\; &\mbox{on}\; \Sigma , 
\\
u(\cdot ,0) = 0,\; \partial _tu (\cdot ,0) = 0.

\end{array}
\right.
\end{equation}

We can argue one more time as in the proof of \eqref{est1}. We find
 \[
 |(\varphi ,q-q_0)+i\sqrt{\lambda _k}(\varphi ,a)|^2\leq Ce^{C\lambda _k}\|\Lambda_{q,a}-\Lambda_{q_0,0}\|^2,
 \] 
 entailing
 \begin{align*}
 &|(\varphi ,q-q_0)|^2\leq Ce^{C\lambda _k}\|\Lambda_{q,a}-\Lambda_{q_0,0}\|^2,
 \\
 &|(\varphi ,a)|^2\leq Ce^{C\lambda _k}\|\Lambda_{q,a}-\Lambda_{q_0,0}\|^2.
 \end{align*}
We end up getting \eqref{est3} by mimicking the rest of the proof of  estimate \eqref{est1}.
\end{proof}

%=============================================

\section{Stability  around a non zero damping coefficient}

We limit ourselves to the one dimensional case and, for sake of simplicity, we take $q$ identically equal to zero. But  the analysis we carry out in the present section is still applicable for any non negative bounded potential.

\smallskip
We assume in the present section that $\Omega =(0,\pi )$. We introduced in the first section the unbounded operators, defined on $\mathcal{H}=H_0^1(\Omega )\times L^2(\Omega )$,
\[ 
\mathcal{A}_0=\left( 
\begin{array}{cc}
0 & I  \\
\frac{d^2}{d x^2}  & 0  \\
 \end{array} 
 \right),\;\; D(\mathcal{A}_0)=\left[H^2(\Omega )\cap H_0^1(\Omega )\right]\times H_0^1(\Omega ):=\mathcal{H}_1
 \]
 and $\mathcal{A}_a=\mathcal{A}_0+\mathcal{B}_a$ with $D(\mathcal{A})=D(\mathcal{A}_0)$, where
 \[ 
\mathcal{B}_a=\left( 
\begin{array}{cc}
0 & 0 \\
0 & -a  \\
 \end{array} 
 \right).
 \]

From \cite[Proposition 6.2.1, page 180]{tucsnakweiss}, $(\mathcal{A}_0,\mathcal{C})$ is exactly observable for any $\tau \geq 2\pi$ when
\[
\mathcal{C}: \left(\begin{array}{cc}\varphi \\ \psi \end{array}\right)\in D(\mathcal{A}_0)\longrightarrow \frac{d\varphi}{dx}(0).
\]
On the other hand, it follows from \cite[Proposition 3.7.7, page 101]{tucsnakweiss} that the skew-adjoint operator $\mathcal{A}_0$ is diagonalizable with eigenvalues $\lambda _k=ik$, $k\in \mathbb{Z}^\ast$, corresponding to the orthonormal basis $(g_k)$, where
\[
g_k=\frac{1}{\sqrt{2}}\left( \begin{array}{cc} \frac{f_k}{ik}\\ f_k\end{array}\right),\;\; k\in \mathbb{Z}^\ast ,
\]
where $(f_k)_{k\in \mathbb{N}}$ is an orthonormal basis of $L^2(\Omega )$ consisting of eigenfunctions of the unbounded operator $A_0=\frac{d^2}{dx^2}$ under Dirichlet boundary condition and $f_{-k}=-f_k$, $k\in \mathbb{N}^\ast$.

\smallskip
Let $\mathcal{H}_{\pm}$ be the closure in $\mathcal{H}$ of ${\rm span}\{ g_{\pm k};\; k\in \mathbb{N}^\ast\}$. Clearly, $\mathcal{H}=\mathcal{H}_+\oplus \mathcal{H}_-$ and $\mathcal{H}_\pm$ is invariant under $\mathcal{A}_0$. Let then $\mathcal{A}_0^\pm:\mathcal{H}_\pm \rightarrow \mathcal{H}_\pm$ be the unbounded operator given by $\mathcal{A}_0^\pm=\mathcal{A}_0{_{|\mathcal{H}_\pm}}$ and
\[
D(\mathcal{A}_0^\pm )=\{u\in \mathcal{H}_\pm ;\; \sum_{k\in \mathbb{N}^\ast}k^2|\langle u,g _{\pm k}\rangle|^2<\infty \}.
\]
Here $\langle \cdot ,\cdot \rangle$ is the scalar product in $\mathcal{H}$.

\smallskip
Let $\mathcal{A}_{a_0}^\pm=\mathcal{A}_0^\pm + \mathcal{B}_{a_0}$ and set
\[
\varrho=\sum_{k\geq 1}\frac{1}{(2k+1)^2}\;\; \mbox{and}\;\; \alpha =\frac{1}{2\sqrt{2(1+\varrho )}}.
\]

In light of \cite[Theorem 2 and Lemma 10]{Shk}, we get

\begin{theorem}\label{t4.1}
Under the assumption
\[
\rho :=\|a_0\|_\infty <\alpha ,
\]
the spectrum of $\pm \mathcal{A}_{a_0}^\pm$ consists in a sequence $(i\mu _k^\pm)$ such that, for any $\delta \in (0,1-\rho^2/\alpha ^2)$, there is an integer $\widetilde{k}$ such that
\[
|i\mu_k^\pm -ik|\leq \overline{\alpha}=\overline{\alpha}(a_0):=\frac{\rho }{\sqrt{4\rho^2+\delta}},\;\; k\geq \widetilde{k}.
\]
In addition, $\mathcal{H}_\pm$ admits a Riesz basis $(\phi _k^\pm)=\left(\left( \begin{array}{c}\varphi _k^\pm \\ i\mu _k^\pm \varphi _k^\pm \end{array}\right)\right)_{k\in \mathbb{N}^\ast}$, each $\phi_k^\pm$ is an eigenfunction corresponding to $i\mu _k^\pm$. 
\end{theorem}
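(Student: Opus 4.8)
The statement is presented as a consequence of \cite[Theorem 2 and Lemma 10]{Shk}, so the plan is to put $\mathcal{A}_{a_0}^\pm$ into the abstract form treated there and then to track the constants. First I would reduce to a self-adjoint unperturbed operator: on $\mathcal{H}_\pm$ one has $\mathcal{A}_0^\pm g_{\pm k}=\pm ik\,g_{\pm k}$, so $L_\pm:=\mp i\,\mathcal{A}_0^\pm$ is positive self-adjoint with simple discrete spectrum $\{k:\,k\geq 1\}$, orthonormal eigenbasis $(g_{\pm k})$, and constant spectral gap equal to $1$. Writing $\mp i\,\mathcal{A}_{a_0}^\pm=L_\pm+B_\pm$ with $B_\pm=\mp i\,\mathcal{B}_{a_0}$ and using $\mathcal{B}_{a_0}(u_1,u_2)=(0,-a_0u_2)$, one gets the uniform bound $\|B_\pm\|_{\mathscr{B}(\mathcal{H})}\leq\|a_0\|_\infty=\rho$. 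Thus the problem is a self-adjoint operator with gaps bounded below by $1$ perturbed by a bounded operator of norm $\leq\rho$, and since the gaps do not grow the smallness of $\rho$ is the only genuine constraint.

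Next I would compute the matrix of the perturbation in the eigenbasis. Using $\int_0^\pi f_k'\overline{f_j'}=k^2\delta_{kj}$ one checks that, up to sign, $\langle\mathcal{B}_{a_0}g_{\pm k},g_{\pm j}\rangle=-\tfrac12\int_0^\pi a_0f_k\overline{f_j}$, i.e. $B_\pm$ is (half of) the sine–Fourier matrix of $a_0$. This matrix is bounded but not Hilbert–Schmidt; the point, however, is that on each circle $\{|z-k|=\tfrac12\}$ separating consecutive eigenvalues the \emph{resolvent} $R_0(z)=(L_\pm-z)^{-1}$ is Hilbert–Schmidt, with $\|R_0(z)\|_{\mathrm{HS}}^2=\sum_{j\geq 1}|z-j|^{-2}$ bounded by $8(1+\varrho)$, the worst case occurring at the midpoints $z=k\pm\tfrac12$, where $\varrho=\sum_{k\geq1}(2k+1)^{-2}$. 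Since each column of $B_\pm$ has $\ell^2$-norm at most $\|B_\pm\|\leq\rho$, the product $B_\pm R_0(z)$ is Hilbert–Schmidt with $\|B_\pm R_0(z)\|_{\mathrm{HS}}\leq\rho\,\|R_0(z)\|_{\mathrm{HS}}\leq\rho\sqrt{8(1+\varrho)}=\rho/\alpha$. Hence the hypothesis $\rho<\alpha$ is exactly the requirement that $\|B_\pm R_0(z)\|_{\mathrm{HS}}<1$ on all these circles, which explains the value $\alpha=\bigl(2\sqrt{2(1+\varrho)}\bigr)^{-1}$.

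With this verified, I would feed the estimate into the two cited results: \cite[Lemma 10]{Shk} yields the spectral localization, namely that for $k$ large the unique eigenvalue of $\pm\mathcal{A}_{a_0}^\pm$ inside $\{|z-ik|<\tfrac12\}$ obeys the sharpened bound $|i\mu_k^\pm-ik|\leq\overline{\alpha}=\rho/\sqrt{4\rho^2+\delta}$ for each admissible $\delta\in(0,1-\rho^2/\alpha^2)$, the sharper radius coming from optimizing the contour radius in the same $\|B_\pm R_0\|_{\mathrm{HS}}<1$ argument, while \cite[Theorem 2]{Shk}, of Bari–Markus type, gives that the eigenfunctions form a Riesz basis of $\mathcal{H}_\pm$. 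The form $\phi_k^\pm=\bigl(\varphi_k^\pm,\,i\mu_k^\pm\varphi_k^\pm\bigr)$ is then forced by the eigenvalue equation: if $\mathcal{A}_{a_0}(\varphi,\psi)=i\mu(\varphi,\psi)$, the first row gives $\psi=i\mu\varphi$.

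The step I expect to be the main obstacle is twofold. On one hand there is the bookkeeping that matches the generic constants of \cite{Shk} to the explicit $\alpha$ and $\overline{\alpha}$ stated here. On the other hand, and more seriously, $\mathcal{B}_{a_0}$ does not leave $\mathcal{H}_\pm$ invariant, so $\mathcal{A}_{a_0}^\pm$ must be read either as the compression $\mathcal{A}_0^\pm+P_\pm\mathcal{B}_{a_0}|_{\mathcal{H}_\pm}$, where $P_\pm$ is the orthogonal projection onto $\mathcal{H}_\pm$, or, applying \cite{Shk} to the full operator $\mathcal{A}_{a_0}$ on $\mathcal{H}$, one must show that the root subspace attached to the cluster of eigenvalues near $\{+ik\}$ (resp. $\{-ik\}$) remains a graph over $\mathcal{H}_+$ (resp. $\mathcal{H}_-$). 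It is precisely here that the smallness $\rho<\alpha$ is used, both to keep the two half-line clusters $\{+ik\}$ and $\{-ik\}$ from interacting and to guarantee that the resulting eigenfunctions still form a Riesz basis of the whole of $\mathcal{H}_\pm$.
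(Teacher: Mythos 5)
Your proposal follows essentially the paper's own route: the paper offers no proof of Theorem 4.1 beyond the bare citation of Shkalikov's Theorem 2 and Lemma 10, which is exactly the reduction you carry out, and your Hilbert--Schmidt resolvent computation on the circles $|z-k|=\tfrac12$ correctly reproduces the paper's constant $\alpha = \bigl(2\sqrt{2(1+\varrho )}\bigr)^{-1}$ for gap $d=1$. The obstacle you flag at the end --- that $\mathcal{B}_{a_0}$ does not leave $\mathcal{H}_\pm$ invariant, so that $\mathcal{A}_{a_0}^{\pm}=\mathcal{A}_0^\pm+\mathcal{B}_{a_0}$ must be read either as the compression $\mathcal{A}_0^\pm+P_\pm\mathcal{B}_{a_0}|_{\mathcal{H}_\pm}$ or via an application of Shkalikov's results to the full operator $\mathcal{A}_{a_0}$ on $\mathcal{H}$ --- is a genuine imprecision in the paper's own formulation, which the paper passes over in silence; on this point your write-up is more careful than the source.
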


We denote by $(\widetilde{\phi}_k^\pm)$ the Riesz basis biorthogonal to $(\phi _k^\pm)$ and define the sequence $(\phi _k)_{k\in \mathbb{Z}^\ast}$ (resp. $(\widetilde{\phi}_k)_{k\in \mathbb{Z}^\ast}$) as follows $\phi_{-k}=-\phi _k^-$ and $\phi_k=\phi _k^+$ (resp. $\widetilde{\phi}_{-k}=-\widetilde{\phi }_k^-$ and $\widetilde{\phi}_k=\widetilde{\phi} _k^+$), $k\in \mathbb{N}^\ast$. Set also $\mu_{-k}=-\mu_k^-$ and $\mu_k=\mu_k^+$, $k\in \mathbb{N}^\ast$. Therefore, $\mathcal{A}_{a_0}\phi_k=i\mu_k\phi_k$, $k\in \mathbb{Z}^\ast$, and, for any $u\in \mathcal{H}$,
\[
u=\sum_{k\in \mathbb{Z}}\langle u,\widetilde{\phi}_k\rangle \phi_k =\sum_{k\in \mathbb{Z}}\langle u,\phi_k\rangle \widetilde{\phi}_k.
\]
Additionally,
\begin{equation}\label{4.1}
\alpha \|u\|^2_{\mathcal{H}}\leq \sum_{k\in \mathbb{Z}^\ast}|\langle u,\widetilde{\phi}_k\rangle|^2,\; \sum_{k\in \mathbb{Z}^\ast}|\langle u,\phi_k\rangle|^2 \leq \beta \|u\|^2_{\mathcal{H}},
\end{equation}
where the constants $\alpha$ and $\beta$ do not depend on $u$ (see for instance \cite[Lemma 252, page 37]{tucsnakweiss}).

\medskip
We pick $a_0$ as in the preceding theorem. Then it is straightforward to check that $u_{a_0}=e^{i\mu _kt}\varphi_k$, $k\in \mathbb{Z}^\ast$, is the solution of the IBVP \eqref{wd} with $q=0$, $a=a_0$, $\left( \begin{array}{c}u_0 \\ u_1 \end{array}\right)=\phi_k$. If $u_a$ is the solution of the IBVP \eqref{wd}, then $u=u_a-u_{a_0}$ is the solution of the IBVP

\begin{equation}\label{4.2}
\left\{
\begin{array}{lll}
 \partial _t^2 u - \Delta u  + a(x) \partial_t u = (a_0-a)i\mu_ke^{i\mu _kt}\varphi _k \;\; &\mbox{in}\;   Q, 
 \\
u = 0 &\mbox{on}\;  \Sigma, 
\\
u(\cdot ,0) =0 ,\; \partial_t u (\cdot ,0) = 0.
\end{array}
\right.
\end{equation}

We fixe $\delta$ as in the statement of Theorem \ref{t4.1}. Then, for some integer $\widetilde{k}$,
\begin{align*}
&\left| e^{i\mu _kt}\right|\leq e^{|i\mu _k-i|k||t}\left|e^{i|k|t}\right|\leq e^{\overline{\alpha}\tau},\;\; |k|\geq \widetilde{k},
\\
&|\mu _k|\leq |k|+\overline{\alpha},\;\; |k|\geq \widetilde{k}.
\end{align*}

These estimates at hand, we can proceed as in the previous section to get, where $\psi _k=i\mu _k\varphi _k$,
\begin{equation}\label{4.3}
 |(a-a_0, \psi _k)|^2=\left| \left\langle \left( \begin{array}{c} 0\\ a-a_0 \end{array}\right),\phi_k \right\rangle  \right|^2\leq Ce^{Ck^2}\|\Lambda_a-\Lambda_{a_0}\|^2.
 \end{equation}
 
It follows from \eqref{4.1},
\begin{equation}\label{4.4}
\alpha \|a-a_0\|^2_2=\alpha \left\| \left( \begin{array}{c} 0\\ a-a_0 \end{array}\right)\right\|_{\mathcal{H}}^2\leq \sum_{|k|\geq 1}
\left| \left\langle \left( \begin{array}{c} 0\\ a-a_0 \end{array}\right),\phi_k \right\rangle  \right|^2.
\end{equation}

In light of \eqref{4.3} and \eqref{4.4}, we have
\begin{align}
\alpha \|a-a_0\|^2_2&\leq CNe^{C\lambda }\|\Lambda_a-\Lambda_{a_0}\|^2+\frac{1}{\lambda }\sum_{|k|>N}k^2 |(a-a_0, \psi _k)|^2\nonumber
\\
&\leq CNe^{C\lambda }\|\Lambda_a-\Lambda_{a_0}\|^2+\frac{1}{\lambda }\sum_{|k|\geq 1}k^2 |(a-a_0, \psi _k)|^2.\label{4.5}
\end{align}
Here $\lambda \geq \lambda _1$ and $N=N(\lambda )$ be the smallest integer satisfying $N^2\leq \lambda <(N+1)^2$.

\smallskip
We note that we cannot pursue the proof similarly to that of \eqref{est1} because $(\psi _k)$ is not necessarily an orthonormal basis of $L^2(\Omega )$. So instead of the boundedness of $a-a_0$ in  $H^1(\Omega )$, we make the assumption, where $m>0$ is fixed,
\begin{equation}\label{4.6}
\sum_{|k|\geq 1}k^2 |(a-a_0, \psi _k)|^2\leq m.
\end{equation}

Under the assumption \eqref{4.6}, \eqref{4.5} entails
\[
\alpha \|a-a_0\|^2_2\leq Ce^{C\lambda }\|\widetilde{\Lambda}_a-\widetilde{\Lambda}_{a_0}\|^2+\frac{m}{\lambda }.
\]
where $\widetilde{\Lambda}_a=\widetilde{\Lambda}_{0,a}$ and $\widetilde{\Lambda}_{a_0}=\widetilde{\Lambda}_{0,a_0}$.

\smallskip
The same minimization argument used in the proof  of \eqref{est1} (see Section 3) allows us to prove the following theorem.

\begin{theorem}\label{theorem4.2}
There exist two constants $C>0$ and $\delta >0$ so that
\[
\|a-a_0\|_2\leq C\left| \ln \left(C^{-1}\| \widetilde{\Lambda} _a-\widetilde{\Lambda} _{a_0}\|\right)\right|^{-1/2},\quad a \in a_0+\delta B_\infty \; \mbox{and \eqref{4.6} holds}.
\]
\end{theorem}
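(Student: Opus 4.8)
The plan is to start from the two--term inequality established immediately above the statement, namely
\[
\alpha\|a-a_0\|_2^2 \leq Ce^{C\lambda}\|\widetilde{\Lambda}_a - \widetilde{\Lambda}_{a_0}\|^2 + \frac{m}{\lambda},\qquad \lambda \geq \lambda_1 = 1,
\]
valid under the a priori bound \eqref{4.6}, and to optimize the free parameter $\lambda$ exactly as was done for \eqref{est1} in Section 3. Writing $\epsilon = \|\widetilde{\Lambda}_a - \widetilde{\Lambda}_{a_0}\|$, I would first reduce to the regime of small $\epsilon$: since $\widetilde{\Lambda}_{0,\cdot}$ depends continuously on its coefficient and $a$ ranges over the bounded set $a_0 + \delta B_\infty$, the quantity $\epsilon$ stays bounded, say $\epsilon \leq M$, while $\|a-a_0\|_2 \leq \sqrt{\pi}\,\delta$ is bounded as well. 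Hence, after enlarging $C$ so that $C^{-1}M < 1$, for $\epsilon$ in any compact subinterval of $(0,M]$ bounded away from $0$ the quantity $|\ln(C^{-1}\epsilon)|$ is bounded below by a positive constant, and the asserted estimate holds trivially. It therefore suffices to treat $\epsilon \leq \delta_0$ for a suitably small threshold $\delta_0$.

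The key step is then the minimization itself. For $\epsilon \leq \delta_0$ I would choose $\lambda$ of the order of $|\ln\epsilon|$; concretely, taking $C\lambda = |\ln\epsilon|$ (which forces $\lambda \geq 1$ once $\epsilon$ is small enough) gives $e^{C\lambda}\epsilon^2 = \epsilon$, so that the exponential term collapses to $C\epsilon$ while the tail term becomes $mC/|\ln\epsilon|$. Since $\epsilon|\ln\epsilon| \to 0$ as $\epsilon \to 0$, the exponential contribution is dominated by the tail term, and one obtains
\[
\alpha\|a-a_0\|_2^2 \leq \frac{C}{|\ln\epsilon|},\qquad \epsilon \leq \delta_0.
\]
Taking square roots and absorbing $\alpha$ into the generic constant yields $\|a-a_0\|_2 \leq C|\ln\epsilon|^{-1/2}$. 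To match the stated form I would finally use that, for $\epsilon$ small, $|\ln(C^{-1}\epsilon)| = |\ln\epsilon - \ln C| \geq \tfrac12|\ln\epsilon|$, so the two quantities are comparable and the constant inside the logarithm may be adjusted.

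As for the main obstacle: essentially all the analytic content has already been expended before the statement, so no deep difficulty remains at this stage. The observability--based lower bound \eqref{4.3}, the Riesz--basis frame inequality \eqref{4.1}, and the a priori bound \eqref{4.6} together produce the two--term inequality above, and the rest is the same balancing argument as for \eqref{est1}. The only points requiring care are (i) ensuring the chosen $\lambda$ respects the constraint $\lambda \geq \lambda_1 = 1$, which is precisely what forces the restriction to small $\epsilon$ and motivates the continuity/boundedness reduction, and (ii) the purely bookkeeping matter of tracking the generic constant inside the logarithm. The genuinely delicate ingredient, which is already in place, is the a priori condition \eqref{4.6}: it replaces the $H^1$--boundedness of $q-q_0$ exploited in Section 3 and cannot be dispensed with here, since $(\psi_k)$ need not form an orthonormal basis of $L^2(\Omega)$.
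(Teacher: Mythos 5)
Your proposal is correct and is essentially the paper's own argument: the paper proves Theorem \ref{theorem4.2} by invoking verbatim ``the same minimization argument used in the proof of \eqref{est1}'' applied to the two-term inequality $\alpha\|a-a_0\|_2^2\leq Ce^{C\lambda}\|\widetilde{\Lambda}_a-\widetilde{\Lambda}_{a_0}\|^2+m/\lambda$, which is exactly your choice $\lambda\sim C^{-1}|\ln\epsilon|$ for small $\epsilon$ plus the continuity/boundedness reduction for $\epsilon$ bounded away from zero. (Only cosmetic remark: in the final constant adjustment you need the bound $|\ln(C^{-1}\epsilon)|\leq 2|\ln\epsilon|$ rather than the reverse inequality you wrote, but since the two quantities are indeed comparable for small $\epsilon$, this is harmless bookkeeping.)
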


\begin{remark}\label{remark4.1}
Let us explain briefly why the result of this section can not be extended to a higher dimensional case. The main reason is that, even for simple geometries,  the eigenvalues of the unperturbed  operators $\mathcal{A}_0^\pm$ do not satisfy a gap condition which is the main assumption in \cite[Theorem 2]{Shk}. If $(\rho _k)$, $\rho_k=k$, is the sequence of eigenvalues of $\pm \mathcal{A}_0^\pm $, we used in an essential way that
\[
\rho_{k+1} -\rho_k =1.
\]
When $\Omega = (0,a)\times (0,b)$, the eigenvalues operator $\mathcal{A}_0^+$ consist in the sequence $\left( \pi ^2\left( \frac{k^2}{a^2}+\frac{\ell ^2}{b^2}\right) \right)_{k,\ell \in \mathbb{N}^\ast}$. These eigenvalues are simple when $\frac{a^2}{b^2}\not \in \mathbb{Q}$ but can condensate in finite interval and therefore they don't satisfy a gap condition like in the one dimensional case.
\end{remark}

%=============================================

\section{An application to clamped Euler-Bernoulli beam}

For the same reason as in the preceding section, we limit our analysis to the one dimensional case. So we let $\Omega =(0,1)$.
%For sake of clarity, we limit ourselves to the case where $\Omega$ is a rectangle of the form $\Omega =(0,a)\times (0,b)$. However, we believe that the analysis we %carry out in the sequel is extendable to other domains.

\smallskip
We introduce the following spaces
\begin{align*}
& H_0=L^2(0,1),
\\
&H_{1/2}=H_0^2(\Omega ),
\\
&H_1=H^4(0,1)\cap H_0^2(\Omega ).
\end{align*}

The natural norm of $H_s$ will denoted by $\|\cdot \|_s$, $s\in \{0,1/2,1\}$.

\smallskip
On $\mathcal{H}=H_{1/2}\times H_0$, we introduce the unbounded operator $\mathcal{A}$ given by
\[
\mathcal{A}=\left(
\begin{array}{cc} 0&I\\- \frac{d^4}{dx^4}&0\end{array} \right),\;\; D(\mathcal{A})=H_1\times H_{1/2}:=\mathcal{H}_1.
\]

We consider a torque observation at an end point. We define then $C:\mathcal{H}_1\rightarrow \mathbb{C}$ by
\[
C\left(\begin{array}{cc}\varphi \\\psi \end{array}\right)= \frac{d^2\varphi}{dx^2}(0).
\]

\smallskip
We are concerned with following IBVP for the clamped Euler-Bernoulli beam equation
\begin{equation}\label{eb0}
\left\{
\begin{array}{lll}
 \partial _t^2 u + \partial _x^4 u = 0 \;\; &\mbox{in}\;   Q, 
\\
u(0,\cdot ) = u(1,\cdot )= 0 \;\; &\mbox{on}\;  (0,\tau ), 
\\
\partial _xu(0,\cdot ) = \partial _xu(1,\cdot )= 0 \;\; &\mbox{on}\;  (0,\tau ), 
\\
u(\cdot ,0) = u_0,\; \partial_t u (\cdot ,0) = u_1.
\end{array}
\right.
\end{equation}

From \cite[Proposition 3.7.6, page 100]{tucsnakweiss}, $\mathcal{A}$ is skew-adjoint and therefore it generates a unitary group on $\mathcal{H}$. Consequently, for any $\left(
\begin{array}{c} u_0\\u_1\end{array} \right)\in \mathcal{H}_1$ the IBVP \eqref{eb0} has a unique solution $u$ so that $(u,u')\in C([0,\tau ],\mathcal{H}_1)\cap C^1([0,\tau ],\mathcal{H})$. Moreover, by \cite[Proposition 6.10.1, page 270]{tucsnakweiss}, $(\mathcal{A},\mathcal{C})$ is exactly observable for any $\tau >0$ and there is a constant $\kappa$ such that
\begin{equation}\label{p1}
\kappa ^2(\|u_0\|_{1/2}^2+\|u_1\|_0^2)\leq \| \partial _x^2u(0,\cdot )\|^2_{L^2((0,\tau ))}.
\end{equation}
Here the constant $\kappa$ is independent on $u_0$ and $u_1$.

\smallskip
Let $\mathcal{B}_a$ be the operator, where $a=a(x)$,
\[
\mathcal{B}_a=\left(
\begin{array}{cc} 0&0\\0& - a\end{array} \right).
\]
This operator is bounded on $\mathcal{H}$ whenever $a\in L^\infty (\Omega )$. Therefore, bearing in mind that $\mathcal{A}+\mathcal{B}_a$ generates a continuous semigroup, the IBVP 

\begin{equation}\label{eb1}
\left\{
\begin{array}{lll}
 \partial _t^2 u + \partial _x^4 u +a(x)\partial _tu= 0 \;\; &\mbox{in}\;   Q, 
\\
u(0,\cdot ) = u(1,\cdot )= 0 \;\; &\mbox{on}\;  (0,\tau ), 
\\
\partial _xu(0,\cdot ) = \partial _xu(1,\cdot )= 0 \;\; &\mbox{on}\;  (0,\tau ), 
\\
u(\cdot ,0) = u_0,\; \partial_t u (\cdot ,0) = u_1
\end{array}
\right.
\end{equation}
has a unique solution $u=u_a(u_0,u_1)$ satisfying $(u,u')\in C([0,T],\mathcal{H}_1)\cap C^1([0,T],\mathcal{H})$, for any $\left( \begin{array}{c} u_0\\u_1\end{array} \right)\in \mathcal{H}_1$. Moreover, the same perturbation argument used in the proof of Theorem \ref{theorem2.2} enables us to show that $(\mathcal{A}+\mathcal{B}_a,\mathcal{C})$ is exactly observable with constant $\widetilde{\kappa}^2\geq \kappa ^2/2$ provided the norm of  the operator $\mathcal{B}_a$ is sufficiently small. That is, there is $\delta >0$ such that for any $\mathcal{B}_a\in \mathscr{B}(\mathcal{H})$ with $\| \mathcal{B}_a\|\leq \delta$, we have 
\begin{equation}\label{p2}
(1/2)\kappa ^2(\|u_0\|_{1/2}^2+\|u_1\|_0^2)\leq \| \partial _x^2u(0,\cdot )\|^2_{L^2((0,\tau ))}.
\end{equation}

In light of \cite[Lemma 6.10.2, page 218]{tucsnakweiss}, the spectrum of $\mathcal{A}$ consists in a sequence of simple eigenvalues $(i\rho _k)_{k\in \mathbb{Z}^\ast}$, where
\[
\rho _k=\pi^2\left( k-\frac{1}{2} \right)^2+a_k, \;\; k\in \mathbb{N}^\ast ,
\]
$(a_k)$ a sequence converging exponentially to $0$, and $\rho_{-k}=-\rho_k$, $k\in \mathbb{N}^\ast$.

\smallskip
Let $A_0$ be the unbounded operator on $L^2(\Omega )$ defined by $A_0=\frac{d^4}{dx^4}$ and $D(A_0)=H^4(\Omega )\cap H_0^2(\Omega )$. Then $A_0$ is diagonalizable with eigenvalues $(\rho_k^2)_{k\in \mathbb{N}^\ast}$. Let $(f_k)_{k\in \mathbb{N}^\ast}$ be a basis of eigenfunctions, each $f_n$ is an eigenfunction corresponding to $\rho _k^2$. Let 
\[
g_k=\frac{1}{\sqrt{2}}\left(\begin{array}{cc} \frac{f_k}{i\rho _k}\\ f_k\end{array}\right),\;\; \textrm{and}\;\; g_{-k}=-g_k,\;\; k\in \mathbb{N}^\ast .
\]
With the help of \cite[Lemma 3.7.7, page 101]{tucsnakweiss}, we get that $(g_k)_{k\in \mathbb{Z}^\ast}$ is an orthonormal basis of $\mathcal{A}_0$.

\smallskip
Define $\mathcal{H}_\pm$ as the closure of $\textrm{span}\{ g_{\pm k};\; k\in \mathbb{N}^\ast \}$. Then $\mathcal{H}=\mathcal{H}_-\oplus \mathcal{H}_+$ and $\mathcal{H}_\pm$ is invariant under $\mathcal{A}_0$. We consider $\mathcal{A}_0^\pm:\mathcal{H}_\pm \rightarrow \mathcal{H}_\pm$ the unbounded operator given by $\mathcal{A}_0^\pm=\mathcal{A}_0{_{|\mathcal{H}_\pm}}$ and
\[
D(\mathcal{A}_0^\pm )=\{u\in \mathcal{H}_\pm ;\; \sum_{k\in \mathbb{N}^\ast}k^2|\langle u,g _{\pm k}\rangle|^2<\infty \},
\]
where $\langle \cdot ,\cdot \rangle$ is the scalar product in $\mathcal{H}$, and we set $\mathcal{A}_{a_0}^\pm =\mathcal{A}_{a_0}^\pm +\mathcal{B}_{a_0}$.

\smallskip
Since $\rho_{k+1}-\rho_k\rightarrow +\infty$ as $k\rightarrow +\infty$, $(\rho _k)_{k\in \mathbb{N}^\ast}$ satisfies the a gap condition. Precisely, there exists $d>0$ so that 
\[
\rho_{k+1}-\rho_k\geq d,\;\; k\in \mathbb{N}^\ast.
\]

Set 
\[
\alpha '=\frac{d}{2\sqrt{2(1+\varrho )}},
\]
where $\varrho$ is as in Section 4.

\smallskip
We have similarly to Theorem \ref{t4.1},

\begin{theorem}\label{t5.1}
Under the assumption
\[
\rho :=\|a_0\|_\infty <\alpha ',
\]
the spectrum of $\pm\mathcal{A}_{a_0}^\pm$ consists in a sequence $(i\mu _k^\pm)$ such that, for any $\delta \in (0,1-\rho^2/(\alpha ')^2)$, there is an integer $\widetilde{k}$ such that
\[
|i\mu_k^\pm -i\rho_k|\leq \overline{\alpha}=\overline{\alpha}(a_0):=\frac{\rho d}{\sqrt{4\rho^2+d^2\delta}},\;\; k\geq \widetilde{k}.
\]
In addition, $\mathcal{H}^\pm $ admits a Riesz basis $(\phi _k^\pm )=\left(\left( \begin{array}{c}\varphi _k^\pm \\ i\mu _k^\pm\varphi _k^\pm \end{array}\right)\right)$, each $\phi_k^\pm $ is an eigenfunction corresponding to $i\mu _k^\pm$. 
\end{theorem}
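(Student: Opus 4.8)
The plan is to argue exactly as for Theorem \ref{t4.1}, since the two situations share the same abstract structure: a skew-adjoint operator $\mathcal{A}_0$ diagonalized by the orthonormal basis $(g_k)_{k\in\mathbb{Z}^\ast}$, a reducing splitting $\mathcal{H}=\mathcal{H}_-\oplus\mathcal{H}_+$ under which $\mathcal{A}_0$ restricts to the reduced operators $\mathcal{A}_0^\pm$, and a bounded perturbation $\mathcal{B}_{a_0}$ of norm at most $\rho$. The only change from Section 4 is that the eigenvalues of $\pm\mathcal{A}_0^\pm$ are now $(\rho_k)$ with $\rho_k=\pi^2(k-\tfrac12)^2+a_k$ rather than $\rho_k=k$. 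Hence the whole proof reduces to checking that \cite[Theorem 2 and Lemma 10]{Shk} apply to the sequence $(i\rho_k)$ perturbed by $\mathcal{B}_{a_0}$, and then to reading off the resulting constants.

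First I would record the two inputs required by \cite{Shk}. The gap condition is already in hand: since $a_k\to 0$ and the leading terms satisfy $\pi^2\big((k+1)-\tfrac12\big)^2-\pi^2\big(k-\tfrac12\big)^2=2\pi^2k\to+\infty$, there is $d>0$ with $\rho_{k+1}-\rho_k\geq d$ for every $k\in\mathbb{N}^\ast$, so that $|\rho_k-\rho_\ell|\geq d$ for $k\neq\ell$. The perturbation is bounded on $\mathcal{H}=H_{1/2}\times H_0$: for $(u,v)\in\mathcal{H}$ one has $\|\mathcal{B}_{a_0}(u,v)\|_{\mathcal{H}}=\|a_0v\|_0\leq\|a_0\|_\infty\|(u,v)\|_{\mathcal{H}}$, whence $\|\mathcal{B}_{a_0}\|\leq\rho$, and the compressions entering the definition of $\mathcal{A}_{a_0}^\pm$ have norm no larger than $\rho$ either.

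With these two facts, the assumption $\rho<\alpha'=d/\big(2\sqrt{2(1+\varrho)}\big)$ is precisely the smallness hypothesis of \cite[Theorem 2]{Shk}, specialized to a spectrum whose minimal gap is $d$ rather than the gap $1$ of Section 4; this is the sole place where the beam-specific constant $d$ intervenes, and it produces the rescaled threshold $\alpha'$ and, for each admissible $\delta\in\big(0,1-\rho^2/(\alpha')^2\big)$, the localization radius $\overline{\alpha}=\rho d/\sqrt{4\rho^2+d^2\delta}$ (for $d=1$ these collapse to the constants $\alpha$ and $\overline{\alpha}$ of Theorem \ref{t4.1}). Since $\overline{\alpha}<d/2$, Shkalikov's theorem then furnishes, for every $k$ past some index $\widetilde{k}$, exactly one eigenvalue $i\mu_k^\pm$ of $\pm\mathcal{A}_{a_0}^\pm$ in the disc $|i\mu_k^\pm-i\rho_k|\leq\overline{\alpha}$, while \cite[Lemma 10]{Shk} turns the full family of eigenfunctions (those attached to these eigenvalues, together with the finitely many low modes) into a Riesz basis $(\phi_k^\pm)$ of $\mathcal{H}_\pm$. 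That each basis vector has the claimed shape $\big(\varphi_k^\pm,\,i\mu_k^\pm\varphi_k^\pm\big)$ is forced by the first row of $\mathcal{A}_{a_0}^\pm=\left(\begin{array}{cc}0&I\\-\frac{d^4}{dx^4}&-a_0\end{array}\right)$: an eigenvector for $i\mu_k^\pm$ must have its lower component equal to $i\mu_k^\pm$ times its upper one.

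The step I expect to demand the most care is not the gap condition---which is in fact more comfortable than in Section 4 because the beam gaps grow without bound---but the faithful translation of the multiplier $a_0$ into the first-order system so that it matches the abstract perturbation hypotheses of \cite{Shk}. In particular $\mathcal{B}_{a_0}$ does not act diagonally on $(g_k)$ and does not respect the splitting $\mathcal{H}=\mathcal{H}_-\oplus\mathcal{H}_+$, so one must verify that the coupling it introduces is covered by the cited theorem; this coupling affects only finitely many low modes and is dominated, for $k\geq\widetilde{k}$, by the diverging consecutive gaps. This is exactly the structural advantage that fails in higher space dimension, where the eigenvalues condense and no gap $d$ exists, as explained in Remark \ref{remark4.1}.
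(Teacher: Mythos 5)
Your proposal is correct and is essentially the paper's own argument: the paper proves Theorem \ref{t5.1} exactly as it obtained Theorem \ref{t4.1}, namely by invoking \cite[Theorem 2 and Lemma 10]{Shk} once the gap condition $\rho_{k+1}-\rho_k\geq d$ and the smallness condition $\|\mathcal{B}_{a_0}\|\leq\rho<\alpha'$ are in hand, which is precisely what you verify. The one inaccuracy, in your closing paragraph --- the claim that the coupling between $\mathcal{H}_+$ and $\mathcal{H}_-$ induced by multiplication by $a_0$ ``affects only finitely many low modes,'' which is false in general --- is harmless here, because the statement concerns the compressed operators $\mathcal{A}_{a_0}^\pm$ themselves, and your earlier observation that the compressions have norm at most $\rho$ is all that Shkalikov's theorem requires.
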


We define the IB operator  $\tilde{\Lambda} _a$ by
\[
 \tilde{\Lambda} _a : u_1\in H_{1/2}\longrightarrow \partial _x^2u_a(0,u_1)(0,\cdot )\in L^2(0,\tau ).
\]
One can prove in a straightforward manner that $ \tilde{\Lambda} _a$ is bounded  operator between $\mathcal{H}_1$ and $H^1((0,\tau ))$ and its norm can be uniformly bounded, with respect to $a$, by a constant, provided that the $L^\infty$-norm of $a$ is sufficiently small.

\medskip
We carry out  a similar analysis to that after Theorem \ref{t4.1} to get the following stability estimate. 

\begin{theorem}\label{theorem5.2}
Given $m>0$, there exist constants $C>0$ and $\delta >0$ so that
\[
\|a-a_0\|_0\leq C\left| \ln \left(C^{-1}\| \tilde{\Lambda} _a-\tilde{\Lambda} _{a_0}\|\right)\right|^{-1/4},
\]
if $a\in a_0+\delta B_{1,\infty}$ and 
\[
\sum_{|k|\geq 1}\lambda_k |(a-a_0, \psi _k)|^2\leq m,
\]
where $\psi _{\pm k}=i\mu _k^\pm\varphi _k^\pm$, $k\in \mathbb{N}^\ast$.
\end{theorem}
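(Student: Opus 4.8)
The plan is to mimic the argument developed after Theorem \ref{t4.1}, transporting it to the Euler-Bernoulli setting where the eigenvalues $\rho_k \sim \pi^2(k-\tfrac12)^2$ grow quadratically rather than linearly. First I would use Theorem \ref{t5.1} to produce, for $a_0$ with $\|a_0\|_\infty<\alpha'$, the Riesz bases $(\phi_k)$ and the biorthogonal family $(\widetilde\phi_k)$ associated with $\mathcal{A}_{a_0}=\mathcal{A}+\mathcal{B}_{a_0}$, together with the frame-type inequalities analogous to \eqref{4.1}. The eigenfunction $u_{a_0}=e^{i\mu_k t}\varphi_k$ solves \eqref{eb1} with $a=a_0$ and $(u_0,u_1)=\phi_k$; subtracting it from the solution $u_a$ of \eqref{eb1} with coefficient $a$ gives, exactly as in \eqref{4.2}, an IBVP whose right-hand side is $(a_0-a)\,i\mu_k e^{i\mu_k t}\varphi_k$. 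The asymptotics $|i\mu_k-i\rho_k|\le\overline\alpha$ from Theorem \ref{t5.1} furnish the bounds $|e^{i\mu_k t}|\le e^{\overline\alpha\tau}$ and $|\mu_k|\le\rho_k+\overline\alpha\le Ck^2$ for $|k|$ large.

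Next I would apply the observability/source-stability machinery. Since $(\mathcal{A}+\mathcal{B}_a,\mathcal{C})$ is exactly observable with the uniform constant of \eqref{p2} once $\|\mathcal{B}_a\|\le\delta$, the abstract Corollary \ref{corollary2.1} (in its Euler-Bernoulli incarnation, with $\lambda(t)=e^{i\mu_k t}$, so $\lambda(0)=1$ and $\|\lambda'\|^2_{L^2}\le C\mu_k^2\le Ck^4$) yields a Lipschitz source estimate. This produces the analogue of \eqref{4.3}, namely
\[
|(a-a_0,\psi_k)|^2=\left|\left\langle\left(\begin{array}{c}0\\a-a_0\end{array}\right),\phi_k\right\rangle\right|^2\le Ce^{Ck^4}\|\tilde\Lambda_a-\tilde\Lambda_{a_0}\|^2,
\]
where $\psi_{\pm k}=i\mu_k^\pm\varphi_k^\pm$. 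I emphasize that the exponent is now $k^4$ rather than $k^2$, because $\|\lambda'\|^2_{L^2}$ scales like $\mu_k^2\sim k^4$; this is precisely what degrades the final rate from $|\ln|^{-1/2}$ to $|\ln|^{-1/4}$.

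I would then run the splitting argument. Using the lower frame bound as in \eqref{4.4} and introducing $N=N(\lambda)$ with $N^2\le\lambda<(N+1)^2$ (so that $\rho_k\le C\lambda$ corresponds to $|k|\lesssim\lambda^{1/2}$, whence $N\le C\lambda^{1/2}$), I split the sum over $|k|\le N$ and $|k|>N$. The low modes are controlled by the exponential estimate above, while the high modes are absorbed using the hypothesis $\sum_{|k|\ge1}\lambda_k|(a-a_0,\psi_k)|^2\le m$ (the Euler-Bernoulli substitute for the $H^1$-boundedness used in Section 3, needed because $(\psi_k)$ is not orthonormal), giving a tail bound of order $m/\lambda$. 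The balance to optimize is therefore
\[
\alpha\|a-a_0\|_0^2\le Ce^{C\lambda^2}\|\tilde\Lambda_a-\tilde\Lambda_{a_0}\|^2+\frac{m}{\lambda},
\]
where $\lambda^2$ appears in the exponent since $e^{Ck^4}=e^{C(k^2)^2}\le e^{C\lambda^2}$ for $k^2\le N^2\le\lambda$. Minimizing over $\lambda$ by choosing $\lambda\sim|\ln\|\tilde\Lambda_a-\tilde\Lambda_{a_0}\||^{1/2}$ turns the exponential-versus-algebraic competition into the logarithmic rate $|\ln|^{-1/4}$, as claimed.

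The main obstacle I anticipate is bookkeeping the quartic growth correctly through the source estimate: one must verify that $\|\lambda'\|^2_{L^2((0,\tau))}$ for $\lambda(t)=e^{i\mu_k t}$ really scales as $\mu_k^2\sim\rho_k^2\sim k^4$, and that the exact-observability constant in \eqref{p2} stays uniform in $k$ (it does, being independent of the initial data). A secondary technical point is that the spectral asymptotics of Theorem \ref{t5.1} hold only for $k\ge\widetilde k$, so the finitely many low modes $|k|<\widetilde k$ must be handled separately; since these contribute only finitely many terms with eigenvalues bounded below, they are harmlessly absorbed into the generic constant $C$. Once the exponent $k^4$ is tracked faithfully, the final minimization is identical in form to that of Section 3, only with the square root inside the logarithm producing the $1/4$ rather than $1/2$ power.
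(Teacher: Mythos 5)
Your proposal is correct and follows essentially the same route as the paper: the paper gives no separate proof of Theorem \ref{theorem5.2}, stating only that one carries out ``a similar analysis to that after Theorem \ref{t4.1}'', and your write-up is exactly that adaptation — Theorem \ref{t5.1} for the perturbed Riesz basis, the source/observability estimate with $\lambda(t)=e^{i\mu_k t}$, and the splitting plus minimization. In particular you track the one genuinely new point correctly: since $\mu_k\sim\rho_k\sim k^2$, the factor $e^{\tau\|\lambda'\|^2_{L^2}/|\lambda(0)|^2}$ becomes $e^{Ck^4}$, which forces the choice $\lambda\sim|\ln\varepsilon|^{1/2}$ in the optimization and yields the degraded exponent $-1/4$ in place of $-1/2$.
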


\medskip
We mention that the method used in this section and in the previous one is easily adaptable to a Schr\"odinger equation.

%===========================================================

\section{The case of a heat equation} 

We consider the following IBVP for the heat equation
\begin{equation}\label{he}
\left\{
\begin{array}{lll}
 \partial _t u - \Delta u + q(x)u  = 0 \;\; &\mbox{in}\;   Q, 
 \\
u = 0 &\mbox{on}\;  \Sigma, 
\\
u(\cdot ,0) = u_0.
\end{array}
\right.
\end{equation}
Let $H^{2,1}(Q)=L^2((0,\tau ),H^2(\Omega ))\cap H^1((0,\tau ), L^2(\Omega ))$. From \cite[Theorem 1.43, page 27]{choulli}, for any $q\in L^\infty (\Omega )$ and $u_0\in H_0^1(\Omega )$, the IBVP has a unique solution $u_q=u_q(u_0)\in H^{2,1}(Q)$ and, for any $m>0$,
%\begin{equation}\label{hest}
\[
\|u_q\|_{H^{2,1}(Q)}\leq C\|u_0\|_{1,2},
\]
%\end{equation}
where the constant $C=C(M)$ is independent on $q$, $\|q\|_\infty \leq m$.

\smallskip
Let $\Gamma$ be an arbitrary nonempty open subset of $\partial \Omega$ and set $\Upsilon =\Gamma \times (0,\tau )$. Using the trace theorem in \cite[page 26]{choulli}, we obtain  that the following IB mapping
\[
\Lambda _q :u_0\in H_0^1(\Omega )\longrightarrow \partial _\nu u_q(u_0)\in L^2(\Upsilon)
\]
is bounded.

\smallskip
The following lemma will be useful in the sequel. Its proof is sketched in Appendix A.

\begin{lemma}\label{lemmah1}
Let $q_0,q\in L^\infty (\Omega )$ so that $q\in q_0+W^{1,\infty}(\Omega )$. Then $\Lambda _q-\Lambda _{q_0}$ defines a bounded operator from $H_0^1(\Omega )$ into $H^1((0,\tau );L^2(\Gamma ))$. Additionally, for each $m>0$, there exits $C>0$ so that 
\[
\| \Lambda _q-\Lambda _{q_0}\|\leq C,
\]
for all  $q_0,q\in mB_\infty$. Here, $\| \Lambda _q-\Lambda _{q_0}\|$ is the norm of $\Lambda _q-\Lambda _{q_0}$ in $\mathscr{B}(H_0^1(\Omega );H^1((0,\tau );L^2(\Gamma )))$.
\end{lemma}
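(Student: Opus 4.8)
The plan is to write the difference of the two Neumann traces as the Neumann trace of a single parabolic solution with vanishing initial datum, and then to produce the extra time regularity by differentiating that problem once in time. Put $v=u_{q_0}(u_0)$, $u=u_q(u_0)$ and $w=u-v$. Subtracting the two copies of \eqref{he} shows that $w$ solves
\[
\partial_t w-\Delta w+qw=-(q-q_0)v\ \text{in}\ Q,\quad w=0\ \text{on}\ \Sigma,\quad w(\cdot,0)=0,
\]
and by construction $(\Lambda_q-\Lambda_{q_0})u_0=\partial_\nu w{_{|\Upsilon}}$. Since $f:=-(q-q_0)v\in L^2(Q)$ with $\|f\|_{L^2(Q)}\leq \|q-q_0\|_\infty\|v\|_{H^{2,1}(Q)}$, the maximal $L^2$-regularity for $\partial_t-\Delta+q$ (the inhomogeneous-source counterpart of \cite[Theorem 1.43, page 27]{choulli}, applicable because the vanishing initial datum is automatically compatible) yields $w\in H^{2,1}(Q)$ with $\|w\|_{H^{2,1}(Q)}\leq C\|f\|_{L^2(Q)}$. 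The trace theorem of \cite[page 26]{choulli} then gives $\partial_\nu w\in L^2(\Sigma)$ together with $\|\partial_\nu w\|_{L^2(\Sigma)}\leq C\|w\|_{H^{2,1}(Q)}$.

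The key step is the time differentiation. Set $z=\partial_t w$; differentiating the equation for $w$ shows that $z$ solves
\[
\partial_t z-\Delta z+qz=g\ \text{in}\ Q,\quad z=0\ \text{on}\ \Sigma,\quad z(\cdot,0)=z_0,
\]
with $g:=-(q-q_0)\partial_t v\in L^2(Q)$ and, evaluating the $w$-equation at $t=0$ and using $w(\cdot,0)=0$, with $z_0=-(q-q_0)u_0$. It is exactly here that the hypothesis $q-q_0\in W^{1,\infty}(\Omega)$ intervenes: the product of a $W^{1,\infty}$-function with an $H_0^1(\Omega)$-function lies again in $H_0^1(\Omega)$, so $z_0\in H_0^1(\Omega)$ with $\|z_0\|_{1,2}\leq C\|q-q_0\|_{1,\infty}\|u_0\|_{1,2}$, which is precisely the compatibility condition required to solve the $z$-problem in $H^{2,1}(Q)$. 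A second application of maximal regularity gives $z\in H^{2,1}(Q)$ with
\[
\|z\|_{H^{2,1}(Q)}\leq C\big(\|z_0\|_{1,2}+\|g\|_{L^2(Q)}\big),
\]
whence $\partial_\nu z=\partial_t(\partial_\nu w)\in L^2(\Sigma)$ by the same trace theorem. Since $\partial_\nu w$ and $\partial_t(\partial_\nu w)$ both belong to $L^2(\Sigma)=L^2((0,\tau);L^2(\partial\Omega))$, restriction to $\Gamma$ gives $\partial_\nu w{_{|\Upsilon}}\in H^1((0,\tau);L^2(\Gamma))$. Collecting the estimates and bounding $\|v\|_{H^{2,1}(Q)}\leq C\|u_0\|_{1,2}$ produces
\[
\|(\Lambda_q-\Lambda_{q_0})u_0\|_{H^1((0,\tau);L^2(\Gamma))}\leq C\big(\|q-q_0\|_{1,\infty}+\|q-q_0\|_\infty\big)\|u_0\|_{1,2},
\]
which establishes both the boundedness of $\Lambda_q-\Lambda_{q_0}$ and the uniform bound once $q,q_0$ range over a fixed ball; I note that the uniform constant genuinely controls $\|q-q_0\|_{1,\infty}$, so the relevant balls should be understood in $W^{1,\infty}(\Omega)$ rather than in $L^\infty(\Omega)$.

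The main obstacle is the rigorous justification of the differentiation step, namely that $z=\partial_t w$ really solves the stated IBVP with the asserted $H^{2,1}(Q)$ regularity and is not merely a formal computation. I would make this precise either by the difference-quotient method, working with $(w(\cdot,t+h)-w(\cdot,t))/h$ and passing to the limit via a uniform $H^{2,1}$ a priori bound, or by first proving the identity for smooth $q-q_0$ and $u_0$ and then closing by density in the norms above. The second delicate point, already flagged, is the compatibility $z_0=-(q-q_0)u_0\in H_0^1(\Omega)$: the vanishing of the boundary trace uses $u_0\in H_0^1(\Omega)$, while the $H^1$-control of the product is exactly what forces the $W^{1,\infty}$ assumption on $q-q_0$, and this is the structural reason the gain of one time-derivative cannot hold for merely $L^\infty$ perturbations.
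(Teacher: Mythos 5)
Your proof is correct and follows essentially the same route as the paper: write $u_q(u_0)-u_{q_0}(u_0)$ as the solution of a parabolic problem with zero initial data and source $(q_0-q)u_{q_0}(u_0)$, differentiate once in time, use $q-q_0\in W^{1,\infty}(\Omega )$ precisely to secure the $H_0^1(\Omega )$ compatibility of the time-differentiated initial value $-(q-q_0)u_0$, apply maximal $H^{2,1}(Q)$ regularity twice, and conclude with the Neumann trace theorem. The only real difference is how the differentiation step is made rigorous --- the paper does it through the Duhamel/semigroup identity $\partial _tu_\phi =u_{\partial _t\phi ,\phi (\cdot ,0)}$, proved first for $\phi \in C^1([0,\tau ];L^2(\Omega ))$ and then extended by density in the source, which is one of the two routes you sketch --- and your closing observation is accurate: the constant this argument produces depends on $\|q-q_0\|_{1,\infty}$ and not only on $m$, an imprecision that is in fact present in the paper's own claim that $C=C(m)$.
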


In the sequel $\Lambda _q-\Lambda _{q_0}$ is considered as an operator acting from $H_0^1(\Omega )$ into $H^1((0,\tau );L^2(\Gamma ))$.

\smallskip
We assume, without loss of generality, that $q\geq 0$. Indeed, substituting $u$ by $ue^{-\|q\|_\infty t}$, we  see that $q$ in \eqref{he} is changed to $q+\|q\|_\infty$. So, we fix $q_0\in L^\infty (\Omega )$ satisfying $0\leq q_0$ and we let  $0<\lambda _1<\lambda _2 \ldots \leq \lambda _k\rightarrow +\infty$ be the sequence of  eigenvalues, counted according to their multiplicity, of $-\Delta +q_0$ under Dirichlet boundary condition. An orthonormal basis consisting in the corresponding eigenfunctions is denoted by $(\varphi _k)$.

\smallskip
Let $q\in mB_\infty \cap \left(q_0+W^{1,\infty}(\Omega )\right)$. We pick a positive integer $k$. Taking into account that $u_{q_0}(\varphi_k)=e^{-\lambda _k t}\varphi _k$, we obtain that $u=u_q(\varphi _k)-u_{q_0}(\varphi_k)$ is the solution of the IBVP
\begin{equation}\label{h1}
\left\{
\begin{array}{lll}
 \partial _t u - \Delta u + q(x)u  = (q_0-q)e^{-\lambda _kt}\varphi _k \;\; &\mbox{in}\;   Q, 
 \\
u = 0 &\mbox{on}\;  \Sigma, 
\\
u(\cdot ,0) = 0.
\end{array}
\right.
\end{equation}

We set $f=(q-q_0)\varphi _k$ and $\lambda (t)=e^{-\lambda _kt}$. Therefore \eqref{h1} becomes 
\begin{equation}\label{h2}
\left\{
\begin{array}{lll}
 \partial _t u - \Delta u + q(x)u  = \lambda (t) f(x) \;\;& \mbox{in}\;   Q, 
 \\
u = 0 &\mbox{on}\;  \Sigma , 
\\
u(\cdot ,0) = 0.
\end{array}
\right.
\end{equation}

 It is straightforward to check that 
 \begin{equation}\label{h3}
 u(x,t)=\int_0^t\lambda (t-s)v(x,s),
 \end{equation}
 where $v$ is the solution of the IBVP
 \[
\left\{
\begin{array}{lll}
 \partial _t v - \Delta v + q(x)v  = 0 \;\; &\mbox{in}\;   Q, 
 \\
v = 0 &\mbox{on}\;  \Sigma , 
\\
v(\cdot ,0) = f.
\end{array}
\right.
\]

In light of the Carleman estimate in \cite[Theorem 3.4, page 165]{choulli}, we can extend \cite[Proposition 3.5, page 170]{choulli} in order to get the following final time observability inequality 
\begin{equation}\label{h4}
\|v(\cdot ,\tau )\|_{H_0^1(\Omega )}\leq C\|\partial _\nu v\|_{L^2(\Upsilon)}.
\end{equation}
Here $C$ is a constant depending on $m$ but not on $q$.

\smallskip
By the continuity of the trace operator $w\in H^{2,1}(Q)\rightarrow \partial _\nu w_{|\Upsilon}\in L^2(\Upsilon)$, we get from \eqref{h3}
\[
\partial _\nu u(x,t)_{|\Upsilon}=\int_0^t\lambda (t-s)\partial _\nu v(x,s)_{|\Upsilon}.
\]
We proceed as in the beginning of the proof of Theorem \ref{theorem2.1} to deduce the following estimate
\begin{equation}\label{h7}
\|\partial _\nu v\|_{L^2(\Upsilon )}\leq \sqrt{2}e^{\tau ^2\lambda _k^2}\|\partial _\nu u\|_{H^1((0,\tau ), L^2(\Gamma ))}.
\end{equation}

\smallskip
On the other hand
\begin{equation}\label{h5}
v(x,t)=\sum_{\ell \geq 1}e^{-\lambda _\ell t}(f,\varphi _\ell )\varphi _\ell .
\end{equation}
Hence
\[
\|v(\cdot ,\tau )\|_2^2= \sum_{\ell \geq 1}e^{-2\lambda _\ell \tau }|(f,\varphi _\ell )|^2.
\]
Arguing as in Section 3, we get, for any $\lambda \geq \lambda _1$ and $N=N(\lambda )$ satisfying $\lambda _N\leq \lambda <\lambda_{N+1}$,
\begin{equation}\label{h6}
\|f\|_2^2\leq e^{2\lambda _k \tau }\|v(\cdot ,\tau )\|_2+\frac{1}{\lambda ^2}\sum_{\ell >N}\lambda _\ell ^2|(f,\varphi _\ell )|^2.
\end{equation}

By Green's formula, we obtain
\[
\lambda _\ell (f,\varphi _\ell)=-\int_\Omega \Delta (q-q_0)\varphi_\ell \varphi_kdx +2\int_\Omega \nabla (q-q_0)\cdot \nabla \varphi_k\varphi_\ell dx +\lambda _k(f,\varphi _\ell).
\]
Therefore, under the assumption that $q\in q_0+ W^{2,\infty}(\Omega )$ and $\| q-q_0\|_{2,\infty}\leq m$, 
\[
\lambda _\ell |(f,\varphi _\ell)|\leq (1+\sqrt{\lambda _k})m +\lambda _k|(f,\varphi _\ell)|.
\]

This estimate in \eqref{h6} yields
\begin{align*}
\|f\|_2^2&\leq e^{2\lambda \tau}\|v(\cdot ,\tau)\|_2^2+\frac{2(1+\lambda_k)m^2+\lambda _k^2}{\lambda ^2}\sum_{\ell >N}|(f,\varphi _\ell )|^2
\\
&\leq e^{2\lambda \tau}\|v(\cdot ,\tau)\|_2^2+\frac{2(1+\lambda_k)m^2+\lambda _k^2}{\lambda ^2}\|f\|_2^2
\\
&\leq e^{2\lambda \tau}\|v(\cdot ,\tau)\|_2^2+\frac{C\lambda _k^2}{\lambda ^2}\|f\|_2^2
\\
&\leq e^{2\lambda \tau}\|v(\cdot ,\tau)\|_2^2+\frac{C\lambda _k^2}{\lambda ^2}.
\end{align*}
This inequality together with \eqref{h7} imply
%\begin{equation}\label{h8}
\[
\|(q-q_0)\varphi _k\|_2^2=\|f\|_2^2\leq \sqrt{2}e^{2\tau ^2\lambda _k^2+2\lambda \tau}\|\partial _\nu u\|^2_{H^1((0,\tau ), L^2(\Gamma ))}+\frac{C\lambda _k^2}{\lambda ^2}.
\]
%\end{equation}
But
\[
\|\partial _\nu u\|_{H^1((0,\tau ), L^2(\Gamma ))}=\|\Lambda_q(\varphi )-\Lambda _{q_0}(\varphi _k)\|_{H^1((0,\tau ), L^2(\Gamma ))}\leq \|\Lambda_q-\Lambda _{q_0}\| \| \varphi _k\|_{H_0^1(\Omega )}\leq \sqrt{\lambda_k}\|\Lambda_q-\Lambda _{q_0}\|.
\]
Whence,
\begin{equation}\label{h8}
\|(q-q_0)\varphi _k\|_2^2=\|f\|_2^2\leq \sqrt{2}\lambda _ke^{2\tau ^2\lambda _k^2+2\lambda \tau} \|\Lambda_q-\Lambda _{q_0}\|^2+\frac{C\lambda _k^2}{\lambda ^2}.
\end{equation}

Now the usual way consists in minimizing, with respect to $\lambda$, the right hand side of the  inequality above. By a straightforward computation, one can see that the minimization argument is possible only if
\[
\frac{\lambda _ke^{-2\tau ^2\lambda _k^2}}{\|\Lambda_q-\Lambda _{q_0}\|^2}\gg 1.
\]
But this estimate does not guarantee that $\|\Lambda_q-\Lambda _{q_0}\|$ can be chosen arbitrarily small uniformly in $k$. However, the minimization argument works if we perturb $q_0$ by a finite dimensional subspace. That what we will discuss now.

\smallskip
Let $I>0$ be a given integer and $E_I=\textrm{span}\{\varphi_1,\ldots ,\varphi _I\}$. Since $|(q-q_0,\varphi _k)|^2\leq |\Omega |\|(q-q_0)\varphi _k\|_2^2$ by Cauchy-Schwarz's inequality, we get from \eqref{h8}
\[
\|q-q_0\|_2^2=\sum_{k=1}^I|((q-q_0),\varphi _k)|^2\leq C_I\left(e^{2\lambda \tau}\|\Lambda_q-\Lambda _{q_0}\|+\frac{1}{\lambda ^2}\right),
\]
for some constant $C_I$ depending on $I$. We observe that, according to the preceding analysis, $C_I$ surely blows-up when $I\rightarrow +\infty$.

\smallskip
Minimizing with respect to $\lambda$ the right hand side of the inequality above, we get
\[
\|q-q_0\|_2\leq C_I\left|\ln \left(\|\Lambda_q-\Lambda _{q_0}\|\right)\right|^{-1},
\]
provided that $\|\Lambda_q-\Lambda _{q_0}\|$ is sufficiently small. By a simple continuity argument, we see that $\|\Lambda_q-\Lambda _{q_0}\|$ is small whenever $\|q-q_0\|_1,\infty$ is small. If $\Lambda ^I_q=\Lambda _q{_{|E_I}}$, we end up getting

\begin{theorem}\label{theorem6.1}
Under the preceding notations and assumptions, there exist two constants $C_I$ and $\delta _I$ so that
\[
\|q-q_0\|_2\leq C_I\left|\ln \left(\|\Lambda ^I _q-\Lambda^I_{q_0}\|\right)\right|^{-1},
\]
if $\|q-q_0\|_{1,\infty}\le \delta _I$.
\end{theorem}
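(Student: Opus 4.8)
The plan is to gather the ingredients already assembled above---above all the estimate \eqref{h8}---and to carry out the minimization in $\lambda$ that was seen to fail in general but becomes legitimate once the perturbation is confined to the finite dimensional space $E_I$. Throughout I assume $q-q_0\in E_I$, so that $q-q_0=\sum_{k=1}^I(q-q_0,\varphi_k)\varphi_k$. On the finite dimensional space $E_I$ all norms are equivalent and each $\varphi_k$ is smooth, so the regularity hypotheses ($q\in q_0+W^{2,\infty}(\Omega)$ with $\|q-q_0\|_{2,\infty}\le m$) used to derive \eqref{h8} hold automatically as soon as $\|q-q_0\|_{1,\infty}$ is small.

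First I would combine the identity $\|q-q_0\|_2^2=\sum_{k=1}^I|(q-q_0,\varphi_k)|^2$ with the Cauchy-Schwarz bound $|(q-q_0,\varphi_k)|^2\le|\Omega|\,\|(q-q_0)\varphi_k\|_2^2$ and sum \eqref{h8} over $k=1,\dots,I$. Since every probing mode $\varphi_k$ with $k\le I$ lies in $E_I$, the full operator norm $\|\Lambda_q-\Lambda_{q_0}\|$ appearing in \eqref{h8} may there be replaced by the restricted norm $\|\Lambda^I_q-\Lambda^I_{q_0}\|$. Because the sum over $k$ is finite, the mode-dependent prefactors $\lambda_k e^{2\tau^2\lambda_k^2}$ and $\lambda_k^2$ are absorbed into a single constant $C_I$ (depending on $I$, $\tau$, $m$, and blowing up as $I\to\infty$), which yields
\[
\|q-q_0\|_2^2\le C_I\Big(e^{2\lambda\tau}\,\|\Lambda^I_q-\Lambda^I_{q_0}\|+\frac{1}{\lambda^2}\Big),\qquad \lambda\ge\lambda_1,
\]
where I have also used $\|\Lambda^I_q-\Lambda^I_{q_0}\|^2\le\|\Lambda^I_q-\Lambda^I_{q_0}\|$, which is harmless once the latter is at most $1$.

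The second step is the elementary optimization in $\lambda$. Writing $A=\|\Lambda^I_q-\Lambda^I_{q_0}\|$, the right-hand side is of the form $C_I\big(e^{2\lambda\tau}A+\lambda^{-2}\big)$; for $A$ small it is, up to constants, minimized by $\lambda\sim|\ln A|/(2\tau)$, which renders the exponential term subdominant and leaves $\lambda^{-2}\sim|\ln A|^{-2}$. This gives $\|q-q_0\|_2^2\le C_I|\ln A|^{-2}$, that is the asserted bound $\|q-q_0\|_2\le C_I\,|\ln(\|\Lambda^I_q-\Lambda^I_{q_0}\|)|^{-1}$, valid as soon as $A$ is below a threshold $\varepsilon_I$. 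Finally, by the continuity of the map $q\mapsto\Lambda_q$ in the relevant norms (in the spirit of Lemma \ref{lemmah1}) together with $\|\Lambda^I_q-\Lambda^I_{q_0}\|\le\|\Lambda_q-\Lambda_{q_0}\|$, one may pick $\delta_I>0$ so that $\|q-q_0\|_{1,\infty}\le\delta_I$ forces $A\le\varepsilon_I$, which completes the argument.

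I expect the genuinely delicate point---already isolated in the discussion preceding the statement---to be the non-uniformity in $k$ of the factor $e^{2\tau^2\lambda_k^2}$ in \eqref{h8}: for a fixed level of smallness of $\|\Lambda_q-\Lambda_{q_0}\|$ one cannot minimize in $\lambda$ simultaneously over all modes, which is precisely why only a finite dimensional perturbation can be controlled. Restricting to $E_I$ turns this obstruction into the harmless $I$-dependent constant $C_I$, after which the computation is a verbatim repeat of the minimization already performed for \eqref{est1}.
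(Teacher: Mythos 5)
Your proposal is correct and follows essentially the same route as the paper: sum the key estimate \eqref{h8} over the finitely many modes $k\le I$ using Cauchy--Schwarz and the assumption $q-q_0\in E_I$, absorb the $k$-dependent factors into $C_I$, minimize in $\lambda$, and choose $\delta_I$ by the continuity of $q\mapsto\Lambda_q$ from Lemma \ref{lemmah1}. You are in fact slightly more careful than the paper on two points it passes over silently---the replacement of $\|\Lambda_q-\Lambda_{q_0}\|$ by the restricted norm $\|\Lambda^I_q-\Lambda^I_{q_0}\|$ (legitimate because the probing data $\varphi_k$, $k\le I$, lie in $E_I$), and the automatic validity on $E_I$ of the $W^{2,\infty}$ regularity hypothesis behind \eqref{h8}.
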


\begin{remark}\label{remark6.1}
We consider on $L^2(\Omega )$ the following norm, which weaker than its natural norm,
\[
\|w\|_\ast =\left(\sum_{k\geq 1} e^{-3\tau ^2\lambda _k^2}|(w,\varphi _k)|^2\right)^2,\;\; w\in L^2(\Omega ).
\]
Then \eqref{h8} yields
\[
\|q-q_0\|_\ast^2\leq \sqrt{2}e^{2\tau \lambda}\|\Lambda_q-\Lambda _{q_0}\|^2+\frac{C}{\lambda ^2}.
\]
We get by minimizing the right hand side with respect to $\lambda$
\[
 \|q-q_0\|_\ast \leq C\left|\ln \left(\|\Lambda _q-\Lambda_{q_0}\|\right)\right|^{-1}.
 \]
\end{remark}

%=======================================================
\appendix
\section{}

\begin{proof}[Proof of Lemma \ref{lemmah1}]
We start be considering the following homogenous IBVP for the heat equation
\begin{equation}\label{he1}
\left\{
\begin{array}{lll}
 \partial _t u - \Delta u + q(x)u  = \phi \;\; &\mbox{in}\;   Q, 
 \\
u = 0 &\mbox{on}\;  \Sigma, 
\\
u(\cdot ,0) = \psi .
\end{array}
\right.
\end{equation}
Let $\phi\in L^2(Q)$ and $\psi \in H_0^1(\Omega )$. We obtain by applying one more time \cite[Theorem 1.43, page 27]{choulli} that the IBVP \eqref{he1} has a unique solution $u_{\phi ,\psi}\in H^{2,1}(Q)$ provided that $q\in L^\infty (\Omega )$. Moreover, for any $m>0$, there exists $C>0$ so that 
\begin{equation}\label{h9}
\|u_{\phi,\psi}\|_{H^{2,1}(Q)}\leq C\left(\|\psi\|_{H^1(\Omega )} +\|\phi\|_{L^2(Q)}\right),
\end{equation}
uniformly in $q\in mB_\infty$.

\smallskip
Let $A_q$ be the unbounded operator on $L^2(\Omega )$ defined by 
\[
A_q=-\Delta +q,\quad D(A_q)=H^2(\Omega )\cap H_0^1(\Omega ).
\]
Then the solution of \eqref{he1} is given by
\[
u_{\phi ,\psi}(\cdot ,t)=e^{-tA_q}\psi +\int_0^te^{-sA_q}\phi (\cdot ,t-s)ds,
\]
where $e^{-tA_q}$ is the semigroup generated by $-A_q$.

\smallskip
In the sequel, we write $u_\phi$ for $u_{\phi ,0}$.

\smallskip
Let $\phi \in C^1([0,\tau ]; L^2(\Omega ))$ so that $\phi (\cdot ,0)\in H_0^1(\Omega )$. Then
\[
\partial _tu_\phi (\cdot ,t)=e^{-tA_q}\phi(\cdot ,0)+\int_0^te^{-sA_q}\partial _t\phi (\cdot ,t-s)ds.
\]
In other words, $\partial _tu_\phi=u_{\partial _t\phi ,\phi(\cdot ,0)}$. Thus, estimate \eqref{h9} entails
\begin{equation}\label{h10}
\|\partial _tu_\phi\|_{H^{2,1}(Q)}\leq C\left(\|\phi (\cdot ,0)\|_{H^1(\Omega )} +\|\partial _t\phi\|_{L^2(Q)}\right).
\end{equation}

Next, let $\phi \in H^1((0,\tau );L^2(\Omega ))$ with $\phi (\cdot ,0)\in H_0^1(\Omega )$. Observing that $u_\phi =u_{\widetilde{\phi}}+u_{\phi(\cdot ,0 )}$, where $\widetilde{\phi}=\phi -\phi (\cdot ,0)$, and $\partial_tu_{\phi (\cdot ,0)}=u_{0,\phi (\cdot ,0)}$, we see that is sufficient to consider the case $\phi (\cdot ,0)=0$.

\smallskip
By density, there exist a sequence $(\phi_k)$ in $C_0^\infty ((0,\tau ];L^2(\Omega ))$ converging to $\phi \in H^1((0,\tau );L^2(\Omega ))$. Armed with \eqref{h9}, we get in a straightforward manner that $u_{\phi _k}$ and $u_{\partial _t\phi}$ converge respectively to $u_\phi$ and $u_{\partial _t\phi}$ in $H^{2,1}(Q)$. But, in light of the smoothness of $\phi_k$, $\partial _tu_{\phi _k}=u_{\partial _t\phi_k}$. Therefore, we have $\partial _tu_\phi=u_{\partial _t\phi}$ and \eqref{h10} holds true for all $\phi \in H^1((0,\tau );L^2(\Omega ))$ with $\phi (\cdot ,0)\in H_0^1(\Omega )$.

\smallskip
Now, let $q_0,q\in mB_\infty$ so that $q\in q_0+W^{1,\infty }(\Omega )$. Let $u_0\in H_0^1(\Omega )$. By an elementary computation, we get that $u:=u_q(u_0)-u_{q_0}(u_0)=u_\phi$, with $\phi =(q_0-q)u_{q_0}(u_0)$. Consequently, from the preceding discussion, $u,\partial _tu\in H^{2,1}(Q)$ and
\begin{equation}\label{h11}
\|u\|_{H^{2,1}(Q)} +\|\partial _tu\|_{H^{2,1}(Q)}\leq C\|u_0\|_{H^1(\Omega )}.
\end{equation}
For some constant $C=C(m)$.

\smallskip
Finally, using the continuity of the trace operator $w\in H^{2,1}(Q)\rightarrow \partial _\nu w\in L^2(\Upsilon )$, we obtain from \eqref{h11}
\[
\|\Lambda_q(u_0)-\Lambda_{q_0}(u_0)\|_{H^1((0,\tau );L^2(\Gamma ))}\leq C\|u\|_{H^1(\Omega )}.
\]
That is, we proved
\[
\|\Lambda_q-\Lambda_{q_0}\|\leq C,
\]
where $\|\Lambda_q-\Lambda_{q_0}\|$ is the norm of $\Lambda_q-\Lambda_{q_0}$ in $\mathscr{B}(H_0^1(\Omega ),H^1((0,\tau );L^2(\Gamma ))$.
\end{proof}
%=============================================

\bigskip


\begin{thebibliography}{99}

\bibitem{Al} {\sc G. Alessandrini}, {\em Examples of instability in inverse boundary-value problems}, Inverse Problems 13 (1997) 887-897.

\bibitem{tucsnak}  {\sc C.~Alves,  A.-L.~Silvestre, T.~Takahashi and M.~Tucsnak,} {\em Solving inverse source problems using observability. Applications to the Euler-Bernoulli plate equation}, SIAM J. Control Optim. 48 (2009), 1632-1659. 

\bibitem{BCY}{\sc M.~Bellassoued, M.~Choulli and M.~Yamamoto}, {\em Stability estimate for an inverse wave equation and a
 multidimensional Borg-Levinson theorem},  J. Differ. Equat. 247 (2) (2009), 465-494.
 
\bibitem{BY} {\sc G. Bao and K. Yun}, {\em On the stability of an inverse problem for the wave equation},  Inverse Problems 25 (4) (2009), 045003, 7 pp.

\bibitem{BZ} {\sc G. Bao and H. Zhang}, {\em Sensitivity analysis of an inverse problem for the wave equation with caustics}, J. Amer. Math. Soc. 27 (4) (2014), 953-981.

\bibitem{B} {\sc M. Belishev}, {\em Boundary control in
reconstruction of manifolds and metrics (BC method)},
Inverse Problems 13 (1997), R1--R45.

\bibitem{DL} {\sc R.~Dautray et J.-L.~Lions}, {\em Analyse
Math\'ematique et Calcul Num\'erique}, Vol. VIII, Masson, Paris, 1985.

\bibitem{blr} {\sc C.~Bardos, G.~Lebeau and J.~Rauch,} {\em Sharp sufficient conditions for the observation, control, and stabilization of waves from the boundary}, SIAM J. Control Optim.  30 (1992), 1024-1065.

\bibitem{yabuk} {\sc A.~L.~Bukhgeim, J.~Cheng, V.~Isakov and  M.~Yamamoto,} 
{\em Uniqueness in determining damping coefficients in hyperbolic equations}, 
 Analytic extension formulas and their applications (Fukuoka, 1999/Kyoto, 2000), 27-46, Int. Soc.
Anal. Appl. Comput. 9, Kluwer Acad. Publ., Dordrecht, 2001.

\bibitem{CM} {\sc F. Cardoso and R. Mendoza},
{\em On the hyperbolic Dirichlet to Neumann functional}, Commun. PDE
21 (1996), 1235--1252.

\bibitem{choulli} {\sc M.~Choulli,} {\em Une introduction aux probl\`emes inverses elliptiques et paraboliques}, Springer-Verlag, Berlin, 2009.

\bibitem{FI} {\sc A.V.~Fursikov and O.Y.~Imanuvilov}, {\em Controllability of evolution equations},  Lecture Notes Series 34, Seoul National University Research Institute of Mathematics, Global Analysis Research Center, Seoul, 1996.

\bibitem{Is}{\sc V. Isakov},
{\em An inverse hyperbolic problem with many boundary measurements},
 Commun. PDE  16 (1991), 1183--1195.

\bibitem{IS}{\sc V. Isakov and Z. Sun},
{\em Stability estimates for hyperbolic inverse problems with local
boundary data}, Inverse Problems  8 (1992), 193--206.

\bibitem{Ho} {\sc H. Hochstadt}, {\em Integral equations},  Wiley, NY, 1971.

\bibitem{KKL}{\sc A. Katchalov, Y. Kurylev and M. Lassas}
{\em Inverse boundary spectral problems,} Chapman \& Hall/CRC, Boca
Raton, (2001).

\bibitem{LLT}{\sc I.~Lasiecka, J.-L.~Lions and R.~Triggiani},
{\em Non homogeneous boundary value problems for second order
hyperbolic operators}, J. Math. Pure et Appl. 65 (1986), 149-192.


\bibitem{RS}{\sc Rakesh and W.~Symes},
{\em Uniqueness for an inverse problems for the wave equation},
Commun. PDE 13 (1988), 87-96.


\bibitem{Shk} {\sc A. A.~Shkalikov}, {\em On the basis property of root vectors of a perturbed self-adjoint operator}, Proceedings of the Steklov Institute of Mathematics, 2010, Vol. 269, pp. 284-298, Pleiades Publishing, Ltd., 2010.
Original Russian Text A. A. Shkalikov, 2010, published in Trudy Matematicheskogo Instituta imeni V.A. Steklova, 2010, Vol. 269, pp. 290-303.

\bibitem{Su}{\sc Z. Sun},
{\em On continous dependence for an inverse initial boundary value
problem for the wave equation}, J. Math. Anal. Appl. 150
(1990), 188-204.


\bibitem{tucsnakweiss} {\sc M.~Tucsnak and G.~Weiss}, {\em Observation and control for operator semigroups.} Birkh\"auser Advanced Texts, Birkh\"auser Verlag, Basel, 2009.

\end{thebibliography}
\end{document}